\newtheorem{theorem}{Theorem}
\newtheorem{lemma}{Lemma}
\newtheorem{problem}{Problem}
\newtheorem{proposition}{Proposition}
\theoremstyle{definition}
\newtheorem{remark}{Remark}
\newtheorem{definition}{Definition}
\tikzset{global scale/.style={
    scale=#1,
    every node/.append style={scale=#1}
  }
}
\newcommand{\gaussianrho}{\mathcal{N}\pth{\begin{pmatrix} 0 \\ 0 \end{pmatrix}, \begin{pmatrix} 1 & \rho \\ \rho & 1 \end{pmatrix}}}
\newcommand{\maE}{\mathcal{E}}
\newcommand{\maH}{\mathcal{H}}
\newcommand{\maI}{\mathcal{I}}
\newcommand{\maN}{\mathcal{N}}
\newcommand{\maP}{\mathcal{P}}
\newcommand{\maQ}{\mathcal{Q}}
\newcommand{\maS}{\mathcal{S}}
\newcommand{\maT}{\mathcal{T}}
\newcommand{\prob}[1]{ \mathbb{P}\left[ #1 \right] }
\newcommand{\expect}[1]{ \mathbb{E}\left[ #1 \right] }
\newcommand{\ones}{\mathbbm{1}}
\newcommand{\en}{\mathsf{{e}}}
\newcommand{\pth}[1]{\left( #1 \right)}
\newcommand{\qth}[1]{\left[ #1 \right]}
\newcommand{\sth}[1]{\left\{ #1 \right\}}
\newcommand{\ti}{\tilde}
\newcommand{\TV}{\mathsf{TV}}
\newcommand{\Bin}{\mathrm{Bin}}
\newcommand{\sfC}{{\mathsf{C}}}
\newcommand{\sfP}{{\mathsf{P}}}
\newcommand{\HG}{\mathrm{HG}}
\newcommand{\ER}{Erd\H{o}s-R\'enyi }
\newcommand{\argmax}{\mathrm{argmax}}
\title{Sample Complexity of Correlation Detection in the Gaussian Wigner Model}
\author{Dong Huang and Pengkun Yang\thanks{
D.\ Huang and P.\ Yang are with the Department of Statistics and Data Science, Tsinghua
University. P. Yang is supported in part by the National
Key R\&D Program of China 2024YFA1015800.
}}
\begin{document}

\maketitle

\begin{abstract}
    
Correlation analysis is a fundamental step in uncovering meaningful insights from complex datasets. In this paper, we study the problem of detecting correlations between two random graphs following the Gaussian Wigner model with unlabeled vertices. Specifically, the task is formulated as a hypothesis testing problem: under the null hypothesis, the two graphs are independent, while under the alternative hypothesis, they are edge-correlated through a latent vertex permutation, yet maintain the same marginal distributions as under the null. We focus on the scenario where two induced subgraphs, each with a fixed number of vertices, are sampled. We determine the optimal rate for the sample size required for correlation detection, derived through an analysis of the conditional second moment. Additionally, we propose an efficient approximate algorithm that significantly reduces running time.

\end{abstract}

\begin{keywords}
{Correlation detection, Gaussian Wigner model, graph sampling, induced subgraphs, efficient algorithm}    
\end{keywords}

\section{Introduction}
Understanding the correlation between datasets is one of the most significant tasks in statistics.
In many applications, the observations may not be the familiar vectors but rather graphs.
Recently, there have been many studies on the problem of detecting graph correlation and recovering the alignments of two correlated graphs.
This problem arises across various domains:
\begin{itemize}
    \item In computer vision, 3-D shapes can be represented as graphs, where nodes are subregions and weighted edges represent adjacency relationships between different regions. A fundamental task for pattern recognition and image processing is determining whether two graphs represent the same object under different rotations \cite{berg2005shape,mateus2008articulated}.
    \item In natural language processing, each sentence can be represented as a graph, where nodes correspond to words or phrases, and the weighted edges represent syntactic and semantic relationships~\cite{hughes2007lexical}. The ontology alignment problem refers to uncovering the correlation between knowledge graphs that are in different languages~\cite{haghighi2005robust}.
    \item In computational biology, protein–protein interactions (PPI) and their networks are crucial for all biological processes. Proteins can be regarded as vertices, and the interactions between them can be formulated as weighted edges~\cite{singh2008global, vogelstein2015fast}.
\end{itemize}

Following the hypothesis testing framework proposed in \cite{barak2019nearly}, we formulate the graph correlation detection problem in Problem~\ref{prob:detection}. For a weighted graph $\mathbf{G}$ with vertex set $V(\mathbf{G})$ and edge set $E(\mathbf{G})$, the weight associated with each edge $uv$ is typically denoted as $\beta_{uv}(\mathbf{G})$ for any $u,v\in V(\mathbf{G})$. 

\begin{problem}\label{prob:detection}
    Let $\mathbf{G}_1$ and $\mathbf{G}_2$ be two weighted random graphs with vertex sets $V(\mathbf{G}_1),V(\mathbf{G}_2)$ and edge sets $E(\mathbf{G}_1),E(\mathbf{G}_2)$. Under the null hypothesis $\maH_0$, $\mathbf{G}_1$ and $\mathbf{G}_2$ are independent; under the alternative hypothesis $\maH_1$, there exists a correlation between  $E(\mathbf{G}_1)$ and $E(\mathbf{G}_2)$. Given $\mathbf{G}_1$ and $\mathbf{G}_2$, the goal is to test $\maH_0$ against $\maH_1$.
\end{problem}

A variety of studies have extensively investigated detection problems. However, the previous studies typically required full observation of all edges in $\mathbf{G}_1$ and $\mathbf{G}_2$ for detection, which is impractical when the entire graph is unknown in certain scenarios.
 In such cases, graph sampling---the process of sampling a subset of vertices and edges from the graph---becomes a powerful approach for exploring graph structure. This technique has been widely used in various settings, as it allows for inference about the graph without needing full observation \cite{leskovec2006sampling,hu2013survey}. In fact, there are several motivations leading us to consider the graph sampling method:
\begin{itemize}
    \item Lack of data. In social network analysis, the entire network is often unavailable due to API limitations. As a result, researchers typically select a subset of users from the network, which  essentially constitutes a sampling of vertices \cite{papagelis2011sampling}.
    \item Testing costs. The Protein Interaction Network is a common focus in biochemical research. However, accurately testing these interactions can be prohibitively expensive. As a result, testing methods based on sampled graphs are often employed \cite{stumpf2005subnets}.
    \item Visualization. The original graph is sometimes too large to be displayed on a screen, and sampling a subgraph provides a digestible representation, making it easier for visualization \cite{wu2016evaluation}.
\end{itemize}

In this paper, we consider sampling induced subgraphs for testing $\maH_0$ against $\maH_1$ when given two random graphs $\mathbf{G}_1$ and $\mathbf{G}_2$ with $|V (\mathbf{G}_1)| = |V(\mathbf{G}_2)|=n$. We randomly sample two induced subgraphs $G_1,G_2$ with $s$ vertices from $\mathbf{G}_1$ and $\mathbf{G}_2$, respectively. 
An induced subgraph of a graph is formed from a subset of the vertices of the graph, along with all the edges between them from the original graph.
Specifically, the sampling process for $G_1$ and $G_2$ is as follows: we first independently select vertex sets  $V(G_1)\subseteq V(\mathbf{G}_1)$ and $V(G_2)\subseteq V(\mathbf{G}_2)$ with $|V(G_1)| = |V(G_2)| =s$, and then
retain the weighted edge between $V(G_1)$ and $V(G_2)$ from the original graphs.
We assume $s\le n$ throughout the paper.

\subsection{Main Results}

In this subsection, we present the main results of the paper. 
Numerous graph models exist, with the Gaussian Wigner model being a prominent example \cite{ding2021efficient, fan2019spectral}, under which the weighted edges $\beta_{uv}(\mathbf{G})$ follow independent standard normals for any vertices $u,v\in V(\mathbf{G})$. This paper focuses on the Gaussian Wigner model with vertex set size $n$. Under the null hypothesis $\maH_0$, $\mathbf{G}_1$ and $\mathbf{G}_2$ follow independent Gaussian Wigner model with $n$ vertices. Under the alternative hypothesis $\maH_1$, $\mathbf{G}_1$ and $\mathbf{G}_2$ follow the following correlated Gaussian Wigner model.
\begin{definition}[Correlated Gaussian Wigner model]
    Let $\pi^*$ denote a latent bijective mapping from $V(\mathbf{G}_1)$ to $V(\mathbf{G}_2)$. We say that a pair of graphs $(\mathbf{G}_1,\mathbf{G}_2)$ are correlated Gaussian Wigner graphs if each pair of weighted edges $\beta_{uv}(\mathbf{G}_1)$ and $\beta_{\pi^*(u)\pi^*(v)}(\mathbf{G}_2)$ for any $u,v\in V(\mathbf{G}_1)$ are correlated standard normals with correlation coefficient $\rho\in (0,1)$.
\end{definition}

Let $\maQ$ and $\maP$ denote the probability measures for the sampled subgraphs $(G_1,G_2)$ under $\maH_0$ and $\maH_1$, respectively. We then focus on the following two detection criteria.

\begin{definition}[Strong and weak detection]\label{def:detection-criteria}
    We say a testing statistic $\maT =\maT(G_1,G_2)$ with a threshold $\tau$ achieves\begin{itemize}
        \item \emph{strong detection}, if the sum of Type I and Type II errors converges to 0 as $n\to\infty$:\begin{align*}
            \lim_{n\to\infty} \qth{\maP\pth{\maT<\tau}+\maQ\pth{\maT\ge \tau}} = 0;
        \end{align*}
        \item \emph{weak detection}, if the sum of Type I and Type II errors is bounded away from 1 as $n\to\infty$:\begin{align*}
            \lim_{n\to\infty} \qth{\maP\pth{\maT<\tau}+\maQ\pth{\maT\ge \tau}} <1.
        \end{align*}
    \end{itemize} 
\end{definition}

It is well-known that the minimal value of the sum of Type I and Type II errors between $\maP$ and $\maQ$ is $1-\TV(\maP,\maQ)$ (see, e.g., \cite[Theorem 7.7]{polyanskiy2025information}), achieved by the likelihood ratio test, where $\TV\pth{\maP,\maQ} = \frac{1}{2}\int|\,d\maP-\,d\maQ|$ is the total variation distance between $\maP$ and $\maQ$.
Thus strong and weak detection are equivalent to $\lim_{n\to \infty} \TV\pth{\maP,\maQ} = 1$ and $\lim_{n\to\infty} \TV\pth{\maP,\maQ} >0$, respectively.
We then establish the main results of correlation detection in the Gaussian Wigner model.

\begin{theorem}\label{thm:main-thm}
    There exist   constants $\overline{C},\underline{C}$ such that, for any $0<\rho<1$, if $s^2\ge \overline{C}\pth{\frac{ n\log n}{\log\pth{1/(1-\rho^2)}}\vee n}$, \begin{align*}
        \TV(\maP,\maQ) \ge 0.9.
    \end{align*}
    Moreover, if $s^2 = \omega(n)$, $\TV\pth{\maP,\maQ} = 1-o(1)$.
    
    Conversely, if $s^2\le  \underline{C}\pth{\frac{n\log n}{\log\pth{1/(1-\rho^2)}}\vee n}$,\begin{align*}
        \TV(\maP,\maQ)\le 0.1.
    \end{align*}
    Moreover, if $s^2 \le \frac{\underline{C}n\log n}{\log\pth{1/(1-\rho^2)}}$ or $s^2 = o(n)$, $\TV\pth{\maP,\maQ} = o(1)$.
\end{theorem}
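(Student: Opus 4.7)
The proof has two halves: a positive direction establishing achievability via a test construction, and a negative direction establishing impossibility via the second-moment method. By the exchangeability of vertex labels I identify $V(\mathbf{G}_1) = V(\mathbf{G}_2) = [n]$ with $\pi^* = \mathrm{id}$, so that the sample $(G_1, G_2)$ is determined by the (latent) overlap $O = V(G_1) \cap V(G_2)$ of random size $K \sim \mathrm{Hypergeometric}(n, s, s)$ with $\mathbb{E}[K] = s^2/n$, together with Gaussian edge weights that are $\rho$-correlated along pairs within $O$ and independent elsewhere.

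\paragraph{Positive direction.} I would analyze the scan statistic
\begin{align*}
\maT_{k^*} = \max_{\sigma \in \Sigma_{k^*}} \sum_{\{i, j\} \subseteq \mathrm{dom}(\sigma)} A_1(i, j)\, A_2(\sigma(i), \sigma(j)),
\end{align*}
where $\Sigma_{k^*}$ denotes the set of partial injections $V(G_1) \to V(G_2)$ of size $k^*$. Under $\maH_1$, on the likely event $\{K \ge k^*\}$, restricting $\pi^*$ to any $k^*$ overlap vertices produces a candidate $\sigma$ whose score concentrates around $\rho \binom{k^*}{2}$ by Gaussian chaos. Under $\maH_0$, a union bound over $|\Sigma_{k^*}| \le s^{2k^*}$ combined with Bernstein-type tail inequalities yields $\maT_{k^*} = O\bigl((k^*)^{3/2} \sqrt{\log s}\bigr)$ with high probability. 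Balancing the signal $\rho (k^*)^2$ against this noise requires $k^* \gtrsim \log s/\rho^2 \asymp \log n/\log(1/(1-\rho^2))$; combined with the overlap constraint $k^* \le K \asymp s^2/n$, this gives the primary threshold $s^2 \gtrsim n\log n/\log(1/(1-\rho^2))$. The auxiliary $s^2 \gtrsim n$ condition ensures $\mathbb{E}[K] \gtrsim 1$. For the strong-detection regime $s^2 = \omega(n)$, I would refine via a conditional second-moment argument restricted to the typical overlap event $\{K \gtrsim s^2/n\}$, which drives both error probabilities to zero.

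\paragraph{Negative direction.} I bound $\TV(\maP, \maQ) \le \sqrt{\chi^2(\maP, \maQ)/2}$ and compute, by marginalizing over two independent copies $\tau, \tau'$ of the sampling-induced partial bijection,
\begin{align*}
\chi^2(\maP, \maQ) + 1 = \mathbb{E}_{\tau, \tau'}\left[\prod_{\ell \ge 1} \bigl(1 - \rho^{2\ell}\bigr)^{-c_{2\ell}(\tau, \tau')}\right],
\end{align*}
where $c_{2\ell}(\tau, \tau')$ counts the $2\ell$-cycles in the bipartite edge-multigraph $H(\tau, \tau')$ whose left and right vertex sets are the edges of $G_1$ and $G_2$, respectively, with edges recording the matched pairs under $\tau$ and $\tau'$. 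This identity follows from iterated Gaussian integration along the alternating cycles (paths contribute $1$). The dominant length-$2$ cycles correspond to common matched edges, whose count is controlled by $\binom{L}{2}$ with $L = |\{i : \tau(i) = \tau'(i)\}|$ satisfying $\mathbb{E}[L] \asymp s^2/n^2$. My plan is to truncate on $\{L \le \ell^*\}$ with $\ell^* = \Theta\bigl(\sqrt{\log n/\log(1/(1-\rho^2))}\bigr)$, so that $\binom{\ell^*}{2}\log(1/(1-\rho^2)) = O(\log n)$ keeps the factor manageable, while occupancy-style moment bounds on $L$, derived from the joint law of two independent partial matchings with random hypergeometric supports, ensure $\prob{L > \ell^*}$ is small enough to absorb the tail contribution.

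\paragraph{Main obstacle.} The principal technical challenge is obtaining a sharp tail estimate for $L$ at the exact exponential rate $\log(1/(1-\rho^2))$, together with aggregated control of the higher-cycle ($\ell \ge 2$) contributions in the product formula. Both require delicate combinatorial accounting on pairs of independent partial matchings with random-sized hypergeometric supports, in spirit analogous to the fixed-point analysis of random permutations but with matching-induced couplings. A careful separation of these contributions---paired with the complementary achievability argument above---is what ultimately produces the sharp double-threshold $s^2 \asymp (n\log n/\log(1/(1-\rho^2))) \vee n$.
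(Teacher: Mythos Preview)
Your positive direction has a genuine gap in the regime $\rho\to 1$. The claimed asymptotic $\log s/\rho^2 \asymp \log n/\log(1/(1-\rho^2))$ fails there: as $\rho\to 1$ we have $\rho^2\to 1$ while $\log(1/(1-\rho^2))\to\infty$, so your product scan $\maT_{k^*}$ only delivers detection when $s^2\gtrsim n\log n/\rho^2\asymp n\log n$, whereas the theorem promises detection already at $s^2\gtrsim n\log n/\log(1/(1-\rho^2))\vee n$, which can be as small as $Cn$ when $1-\rho^2\le n^{-C'}$. The reason is structural: for independent standard Gaussians $X,Y$, the product $XY$ is sub-exponential, and optimizing the Chernoff bound under $\maH_0$ gives at best $\exp\bigl(-c\rho^2\binom{k^*}{2}\bigr)$; the exponent is governed by $\rho^2$, never by $\log(1/(1-\rho^2))$. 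The paper fixes this by switching, for $\rho>1-e^{-6}$, to the mean-squared-error score $f(x,y)=-\tfrac12(x-y)^2$. Under $\maH_0$ one has $(X-Y)^2\sim 2\chi_1^2$ while under $\maH_1$ it is $2(1-\rho)\chi_1^2$, and the relevant Chernoff rate becomes $\Theta\bigl(\log(1/(1-\rho))\bigr)\binom{k^*}{2}$, which matches the target. The product and MSE tests coincide on full graphs (where $\sum_e\beta_e^2$ is common to every candidate), but here the scan ranges over variable sub-index sets, so they are genuinely different and you need the second one.

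On the negative side your cycle identity is correct, but the paper's bookkeeping differs in two ways that sidestep the obstacle you name. First, the conditioning is not on your fixed-point count $L$ but on the high-probability overlap event $\{|S_\pi|\le (1+\epsilon)s^2/n\}$. Second, all cycle lengths are controlled \emph{at once} through the core set $I^*=\argmax\{|I|:I\subseteq V(G_1),\ \pi(I)=\tilde\pi(I)\}$ (set-wise, not pointwise, agreement), via $\prod_{C}(1-\rho^{2|C|})^{-1}\le (1-\rho^2)^{-\binom{|I^*|}{2}}$ and the occupancy bound $\prob{|I^*|=t}\le (s/n)^{2t}$. Your $L$ only sees length-$2$ edge cycles and leaves the higher-$\ell$ contributions to a separate argument; the core-set route absorbs them automatically. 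Finally, the paper does not run the second moment across the entire range of $\rho$: for $\rho^2<n^{-1/2}$ it instead reduces (by data processing, revealing the common vertex sets) to the full-observation lower bound of Wu--Xu.
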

The proof of Theorem~\ref{thm:main-thm} is deferred to Appendix~\ref{apd:main-thm}.
Theorem~\ref{thm:main-thm} implies that, for the hypothesis testing problem between $\maH_0$ and $\maH_1$ when sampling two induced subgraphs, the optimal rate for the sample size $s$ is of the order $\pth{\frac{n\log n}{\log\pth{1/(1-\rho^2)}}\vee n}^{1/2}$. Above this order, detection is possible, while below it, detection is impossible. Specifically, when $\frac{\overline{C}n\log n}{\log\pth{1/(1-\rho^2)}}> n^2$, the possibility condition requires $s>n$ in the above Theorem. However, we assume that the sample size $s\le n$, which indicates that there is no theoretical guarantee on detection, even when we sample the entire graph. Indeed, it is shown in~\cite{wu2023testing} that the detection threshold on $\rho$ in the fully correlation Gaussian Wigner model is $\rho^2 \asymp \frac{\log n}{n}$. Our results match the thresholds established in the previous work up to a constant for the special case $s=n$.

The possibility results can serve as a criterion for successful correlation detection in practice. For example, in computational biology, one may sample subgraphs to reduce testing costs, and the possibility results indicate when accurate detection remains feasible. 
Conversely, the impossibility results offer a theoretical tool for privacy protection. For instance, in social network de-anonymization, they imply that no test can succeed under certain conditions, thus providing a theoretical guarantee of privacy for anonymized networks.




\subsection{Related Work}

\paragraph{Graph matching}
The problem of graph matching refers to finding a  correspondence between the nodes of different graphs~\cite{caetano2007learning,livi2013graph}.
Recently, there have been many studies on the analysis of matching two correlated random graphs. In addition to the Gaussian Wigner model, another important model is the \ER model \cite{paul1959random}, where the edge follows Bernoulli distribution instead of normal distribution. 
As shown in~\cite{cullina2016improved, cullina2017exact,hall2023partial}, some sufficient and necessary conditions for the matching problem in the \ER model were provided.
The optimal rate for graph matching in the \ER model has been established in \cite{wu2022settling}, and the constant was sharpened by analyzing the densest subgraph in \cite{ding2023matching}. 
There are also many extensions on Gaussian Wigner model and correlated \ER graph model, including the inhomogeneous \ER model~\cite{racz2023matching, ding2023efficiently}, the partially correlated graphs model~\cite{huang2024information}, the correlated stochastic block model~\cite{chen2024computational,chen2025detecting}, the multiple correlated graphs model~\cite{ameen2024robust,ameen2025detecting}, and the correlated random geometric graphs model~\cite{wang2022random}.

\paragraph{Efficient algorithms and computational hardness}
There are many algorithms on the correlation detection and graph matching problem, including percolation graph matching algorithm \cite{yartseva2013performance}, subgraph matching algorithm \cite{barak2019nearly}, message-passing algorithm~\cite{piccioli2022aligning}, and spectral algorithm \cite{fan2019spectral},
while some algorithms may be computationally inefficient. There are also many efficient algorithms, based on the different correlation coefficient, including \cite{babai1980random, bollobas1982distinguishing, dai2019analysis, ganassali2020tree, ding2021efficient, mao2023exact, ding2023polynomial,mao2023random,araya2024seeded,ding2024polynomial,mao2024testing,ganassali2024statistical,muratori2024faster}.

The low-degree likelihood ratio~\cite{hopkins2017efficient,hopkins2018statistical} has emerged as a framework for studying computational hardness in high-dimensional statistical inference. It conjectures that polynomial-time algorithms succeed only in regimes where low-degree statistics succeed.
Based on the low-degree conjecture, the recent work by \cite{ding2023low, mao2024testing} established sufficient conditions for computational hardness results on the recovery and detection problems.

\subsection{Contributions and Outlines}
In this paper, we derive the optimal rate on sample size for correlation detection in the Gaussian Wigner model. Specifically, we prove that the optimal sample complexity is of rate $s\asymp \pth{\frac{n\log n}{\log(1/(1-\rho^2))}\vee n}^{1/2}$. We also propose a polynomial algorithm that significantly reduces computational cost.

In Sections~\ref{sec:upbd} and~\ref{sec:lwbd}, we prove the possibility results and impossibility results on sample size, respectively. Section~\ref{sec:alg} introduces our polynomial algorithm for correlation detection. Then, we run some numerical experiments in Section~\ref{sec:num-experiments} to verify the effectiveness for our algorithm proposed in Section~\ref{sec:alg}. Finally, Section~\ref{sec:future-direction} offers further discussion and future research directions, and the appendices contain detailed proofs and additional experimental results.




\section{Possibility Results}\label{sec:upbd}
In this section, we prove the possibility results in Theorem~\ref{thm:main-thm} by analyzing the error probability $\maP\pth{\maT<\tau}+\maQ\pth{\maT\ge \tau}$ under different regimes of $\rho$, which provides an upper bound for the optimal sample complexity.
Given a domain subset $S\subseteq V(G_1)$ and an injective mapping $\pi: S\mapsto V(G_2)$, along with a bivariate function $f:\mathbb{R}\times \mathbb{R}\mapsto \mathbb{R}$, we define the \emph{$f-$similarity graph $\maH_\pi^f$} as follows. The vertex set of $\maH_\pi^f$ is $V(\maH_\pi^f) = V(G_1)$, and for each edge $e$, the weighted edge is defined as  \begin{align}\label{eq:f-intersection}
    \beta_{e}(\maH_\pi^f) =
\begin{cases}
    f\pth{\beta_{e}(G_1),\beta_{\pi(e)}(G_2)}&\text{if }e\in\binom{S}{2}\\ 0&\text{otherwise}
\end{cases},
\end{align}
where $\pi(e)$ denotes the edge $\pi(u)\pi(v)$ for any edge $e=uv$.
Let $m\triangleq \frac{(1-\epsilon) s^2}{n}$ for some constant $0<\epsilon<1$, and denote $\maS_{s,m}$ as the set of injective mappings $\pi: S\subseteq V(G_1)\mapsto V(G_2)$ with $|S| = m$. Let $\en\pth{\maH_\pi^f}\triangleq \sum_{e\in E(G_1)} \beta_e\pth{\maH_\pi^f}$ define the sum of weighted edges in $\maH_\pi^f$. In fact, the quantity $\en(\maH_\pi^f)$ can be regarded as a similarity score between two graphs. Our test statistic takes  the form \begin{align}\label{eq:test-statistic}
    \maT(f) = \max_{\pi\in \maS_{s,m}} \en\pth{\maH_\pi^f} = \max_{\pi \in \maS_{s,m}} \sum_{e\in E(G_1)} \beta_e\pth{\maH_\pi^f}.
\end{align}
For simplicity, we write $\maT$ for $\maT(f)$ when the choice of $f$ is clear from the context. By the detection criteria in Definition~\ref{def:detection-criteria}, it suffices to bound the Type I error $\maQ\pth{\maT(f)\ge \tau}$ and the Type II error $\maP\pth{\maT(f)<\tau}$ for some appropriate threshold $\tau$. In the following, we outline a general recipe to derive an upper bound for error probabilities.

\paragraph{Type I error.} Under the null hypothesis $\maH_0$, the sampled subgraphs $G_1$ and $G_2$ are independent. Given a bivariate function $f$ and a threshold $\tau$, it should be noted that the distribution of the \emph{$f-$similarity graph} $\maH_\pi^f$ follows the same distribution for any $\pi \in \maS_{s,m}$. Consequently, applying the union bound yields that \begin{align*}
    \maQ\pth{\maT\ge \tau}\le |\maS_{s,m}| \maQ\pth{\en\pth{\maH_\pi^f} \ge \tau}.
\end{align*}
We then bound the tail probability by a standard Chernoff bound $$\maQ\pth{\en\pth{\maH_\pi^f} \ge \tau}\le \exp\pth{-\lambda \tau}\expect{\exp\pth{\lambda \en\pth{\maH_\pi^f}}}.$$ See~\eqref{eq:lambda-1} in Appendix~\ref{apd:proof-upbd-overlap} for more details.

\paragraph{Type II error.}
Under the alternative hypothesis $\maH_1$, recall that $\pi^*$ denotes the latent bijective mapping from $V(\mathbf{G}_1)$ to $V(\mathbf{G}_2)$. For the induced subgraphs $G_1, G_2$ sampled from $\mathbf{G}_1,\mathbf{G}_2$, we denote the set of common vertices as \begin{align}
    S_{\pi^*}&\triangleq V(G_1)\cap \pth{\pi^*}^{-1}(V(G_2)),\label{eq:def_of_S*}\\ T_{\pi^*}&\triangleq \pi^*(V(G_1))\cap V(G_2)\label{eq:def_of_T*}.
\end{align}
We note that the restriction of $\pi^*$ to $S_{\pi^*}$ is a bijective mapping between $S_{\pi^*}$ and $T_{\pi^*}$, and thus $|S_{\pi^*}| =|T_{\pi^*}|$. 
In our random sampling models, the vertices of $G_1$ and $G_2$ are independent and identically sampled without replacement from the two graphs $\mathbf{G}_1$ and $\mathbf{G}_2$, which yields the following Lemma regarding the sizes of $S_{\pi^*}$ and $T_{\pi^*}$.
\begin{lemma}\label{lem:hypergro-def}
    When randomly sampling vertex sets $V(G_1), V(G_2)$ from $V(\mathbf{G_1}),V(\mathbf{G}_2)$ with $|V(G_1)| = |V(G_2)| =s$, the size of common vertex set in~\eqref{eq:def_of_S*} follows a Hypergeometric distribution $\HG(n,s,s)$. Specifically, \begin{align*}
      \prob{|S_{\pi^*}| = t} =\binom{s}{t}\binom{n-s}{s-t}\bigg/\binom{n}{s},\text{ for any } t\in [s].
    \end{align*}
\end{lemma}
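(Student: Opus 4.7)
The plan is to reduce the statement to the classical fact that the intersection size of two independent uniformly random $s$-subsets of an $n$-element set follows the hypergeometric distribution $\HG(n,s,s)$. The only subtlety is the appearance of the bijection $\pi^*$, which I will absorb by a change of variables.

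First I would rewrite $S_{\pi^*}=V(G_1)\cap (\pi^*)^{-1}(V(G_2))$ and observe that $V(G_1)$ is a uniformly random $s$-subset of $V(\mathbf{G}_1)$. Next, since $V(G_2)$ is a uniformly random $s$-subset of $V(\mathbf{G}_2)$ and $\pi^*$ is a fixed bijection between $V(\mathbf{G}_1)$ and $V(\mathbf{G}_2)$, the pullback $B\triangleq (\pi^*)^{-1}(V(G_2))$ is a uniformly random $s$-subset of $V(\mathbf{G}_1)$; moreover, because $V(G_1)$ and $V(G_2)$ are sampled independently, $V(G_1)$ and $B$ are independent.

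With this reduction, $|S_{\pi^*}|=|V(G_1)\cap B|$ is the intersection size of two independent uniformly random $s$-subsets of an $n$-element set. Conditioning on $V(G_1)$ (whose distribution does not affect the intersection law by symmetry), a direct counting argument gives
\begin{align*}
\prob{|S_{\pi^*}|=t}=\binom{s}{t}\binom{n-s}{s-t}\bigg/\binom{n}{s},
\end{align*}
namely one chooses the $t$ elements of $B$ that fall inside $V(G_1)$ and the remaining $s-t$ elements from the $n-s$ vertices outside, normalized by the total number of $s$-subsets. This is exactly the claimed $\HG(n,s,s)$ distribution.

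There is essentially no obstacle here; the only thing to double-check is that the invocation of $\pi^*$ does not destroy uniformity or independence, which is immediate because $\pi^*$ is a deterministic bijection applied only to $V(G_2)$, whereas $V(G_1)$ is sampled independently. The identity $|S_{\pi^*}|=|T_{\pi^*}|$ stated in the surrounding text is also immediate from $\pi^*$ being a bijection on $S_{\pi^*}$, and requires no separate argument.
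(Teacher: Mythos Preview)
Your proof is correct and is arguably more faithful to the stated sampling model than the paper's own argument. You fix the latent bijection $\pi^*$, push the randomness of $V(G_2)$ back through $(\pi^*)^{-1}$ to obtain an independent uniform $s$-subset $B$ of $V(\mathbf G_1)$, and then count $s$-subsets with prescribed intersection size with $V(G_1)$.

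The paper takes the dual route: it treats $V(G_1)$ and $V(G_2)$ as fixed and counts permutations $\pi\in\maS_n$ with $|\pi(V(G_1))\cap V(G_2)|=t$, decomposing each such $\pi$ into (i) a choice of the $t$ matched vertices on each side together with a bijection between them, (ii) the images of the remaining $s-t$ vertices of $V(G_1)$ in $V(\mathbf G_2)\setminus V(G_2)$, and (iii) the rest of the permutation, arriving at $\binom{s}{t}^2 t!\cdot\binom{n-s}{s-t}(s-t)!\cdot(n-s)!/n!$, which simplifies to the same hypergeometric expression. Your approach is shorter and avoids the implicit symmetry step (swapping randomness from the sampled vertex sets to the permutation); the paper's approach makes the permutation-counting explicit, which it reuses later when averaging over $\pi$ and $\tilde\pi$ in the second-moment analysis.
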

We then establish the main ingredients for controlling the Type II error. Under the distribution $\maP$, given $f$ and $\tau$,\begin{align}
    \nonumber \sth{\maT<\tau} =& \sth{\maT<\tau, |S_{\pi^*}|<m} \cup \sth{\maT<\tau, |S_{\pi^*}|\ge m}\\\label{eq:event_subseteq}
    \subseteq& \sth{|S_{\pi^*}|<m}\cup \sth{\maT<\tau,|S_{\pi^*}|\ge m}.
\end{align}
Since $\expect{|S_{\pi^*}|}=\frac{s^2}{n}>m$, the first event $\sth{|S_{\pi^*}|<m}$ can be bounded by the concentration inequality for Hypergeometric distribution in Lemma~\ref{lem:Hypergeometric_distribution_prop}. For the second event, it can be bounded by $\maP\pth{\maT<\tau\big|\,
  \vert S_{\pi^*}\vert\ge m}$. Under the event $\sth{|S_{\pi^*}|\ge m}$, there exists $\pi^*_m\in \maS_{s,m}$ such that $\pi^*_m = \pi^*$ on its domain set $\mathrm{dom}\pth{\pi^*_m}$. The error probability of the event $\sth{\maT<\tau, |S_{\pi^*}|\ge m}$ can be bounded by $\maP\pth{\en\pth{\maH_{\pi^*_m}^f}<\tau\big|\,\vert S_{\pi^*}\vert\ge m}$. We then use the concentration inequality to bound the tail probability. See~\eqref{eq:upbd-concentration-gau} for more details.

The quantity $\mathsf{e}(\mathcal{H}_\pi^f)$ measures the similarity score of a mapping $\pi$. Under the null hypothesis, $\mathsf{e}(\mathcal{H}_\pi^f)$ has a zero mean for all $\pi$, whereas under the alternative hypothesis, its mean with $\pi=\pi_m^*$ is strictly positive owing to the underlying correlation. We derive concentration inequalities to ensure that $\mathsf{e}\pth{\mathcal{H}_{\pi_m^*}^f}$ exceeds the maximum spurious score arising from stochastic fluctuations under the null, as shown in Propositions~\ref{prop: upbd-overlap} and~\ref{prop: upbd-least-square}.

\subsection{Detection by Maximal Overlap Estimator}
In this subsection, we use the test  statistic~\eqref{eq:test-statistic} with $f(x,y) = xy$ for possibility results. Indeed, this estimator is equivalent to maximizing the overlap on induced subgraphs between $G_1$ and $G_2$. Specifically, we have the following Proposition.
\begin{proposition}\label{prop: upbd-overlap}
    There exists a universal constant $C_1>0$ such that, for any $0<\rho<1$ and $\tau = \binom{m}{2}\frac{\rho}{2}$, if $s^2\ge \frac{C_1 n\log n}{\rho^2}$, \begin{align*}
        \maP\pth{\maT<\tau}+\maQ\pth{\maT\ge \tau} = o(1).
    \end{align*}
\end{proposition}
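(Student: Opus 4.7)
The plan is to instantiate the general recipe stated before the proposition with $f(x,y)=xy$, so that each summand of $\en\pth{\maH_\pi^f}=\sum_{e\in\binom{S}{2}}\beta_e(G_1)\beta_{\pi(e)}(G_2)$ is a product of standard normals. Under $\maQ$ these are products of independent $\maN(0,1)$ variables; under $\maP$, taking $\pi_m^*=\pi^*$ on its domain, each pair $(\beta_e(G_1),\beta_{\pi^*(e)}(G_2))$ is jointly Gaussian with correlation $\rho$, so that $\expect{\en\pth{\maH_{\pi_m^*}^f}}=\binom{m}{2}\rho$, leaving a gap of $\binom{m}{2}\rho/2$ between the mean and the threshold $\tau$.

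For the Type I error I would exploit the closed-form moment generating function $\expect{e^{\lambda XY}}=(1-\lambda^2)^{-1/2}$, valid for independent $X,Y\sim\maN(0,1)$ and $|\lambda|<1$. Since the $\binom{m}{2}$ summands are i.i.d.\ under $\maQ$, a Chernoff bound yields
\begin{align*}
    \maQ\pth{\en\pth{\maH_\pi^f}\ge\tau}\le \exp\pth{-\lambda\tau-\tfrac{1}{2}\binom{m}{2}\log(1-\lambda^2)}.
\end{align*}
Optimizing in $\lambda$ (the first-order condition is $\lambda/(1-\lambda^2)=\rho/2$, so $\lambda\asymp\rho$) and Taylor-expanding $-\log(1-\lambda^2)$ give $\maQ\pth{\en\pth{\maH_\pi^f}\ge\tau}\le \exp\pth{-c_1\binom{m}{2}\rho^2}$ for a universal $c_1>0$. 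Combining with the counting bound $|\maS_{s,m}|=\binom{s}{m}\,s!/(s-m)!\le (es^2/m)^m$ and noting $s^2/m=\Theta(n)$, a union bound yields
\begin{align*}
    \maQ\pth{\maT\ge\tau}\le \exp\pth{O(m\log n)-c_1\tbinom{m}{2}\rho^2},
\end{align*}
which is $o(1)$ as soon as $m\rho^2\gtrsim\log n$. Substituting $m=(1-\epsilon)s^2/n$ converts this into the hypothesis $s^2\ge C_1 n\log n/\rho^2$.

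For the Type II error I would follow the decomposition in~\eqref{eq:event_subseteq}. Since $\expect{|S_{\pi^*}|}=s^2/n$ and $m=(1-\epsilon)s^2/n$, the Hypergeometric concentration in Lemma~\ref{lem:Hypergeometric_distribution_prop} gives $\maP\pth{|S_{\pi^*}|<m}\le\exp\pth{-c_2\epsilon^2 s^2/n}=o(1)$ under the assumed scaling. Conditionally on $\{|S_{\pi^*}|\ge m\}$, the summands of $\en\pth{\maH_{\pi_m^*}^f}$ are i.i.d.\ shifted products of $\rho$-correlated standard normals with mean $\rho$ and sub-exponential tails; a Bernstein-type inequality (equivalently, a Chernoff bound with the bivariate Gaussian product MGF $\expect{e^{\lambda XY}}=\pth{(1-\lambda\rho)^2-\lambda^2}^{-1/2}$) then yields
\begin{align*}
    \maP\pth{\en\pth{\maH_{\pi_m^*}^f}<\tfrac{\rho}{2}\tbinom{m}{2}\,\Big|\,|S_{\pi^*}|\ge m}\le \exp\pth{-c_3\tbinom{m}{2}\rho^2}=o(1)
\end{align*}
under the same hypothesis $m\rho^2\gtrsim\log n$.

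The main obstacle is the Type I union bound. The entropy $\log|\maS_{s,m}|\asymp m\log n$ of partial injections must be overcome by the Chernoff exponent, which is only $\Theta\pth{m^2\rho^2}$ because the Gaussian product MGF behaves like $\exp\pth{\lambda^2/2}$ for small $\lambda$, forcing the optimal tilt $\lambda\asymp\rho$. It is precisely this entropy-versus-concentration tradeoff that fixes the sample-size threshold at $s^2\gtrsim n\log n/\rho^2$ claimed in the proposition.
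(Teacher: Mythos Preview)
Your proposal is correct and follows essentially the same route as the paper: the Type~I bound via the product-MGF $(1-\lambda^2)^{-1/2}$ plus union bound over $|\maS_{s,m}|=\binom{s}{m}^2 m!$, and the Type~II bound via the decomposition~\eqref{eq:event_subseteq} with hypergeometric concentration. The only cosmetic differences are that the paper fixes $\lambda=\rho/2$ rather than optimizing, and for the lower-tail bound on $\en\pth{\maH_{\pi_m^*}^f}$ under $\maP$ it invokes Hanson--Wright (Lemma~\ref{lem:Hanson-Wright} with $M_0=I_{\binom{m}{2}}$) instead of your direct Chernoff argument with the correlated MGF $((1-\lambda\rho)^2-\lambda^2)^{-1/2}$; both yield the same $\exp\pth{-c\binom{m}{2}\rho^2}$ decay.
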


Proposition~\ref{prop: upbd-overlap} provides a sufficient condition on strong detection for any $0<\rho<1$. We refer readers to Appendix~\ref{apd:proof-upbd-overlap} for the detailed proof. Since $1-\TV(\maP,\maQ)\le \maP\pth{\maT<\tau}+\maQ\pth{\maT\ge \tau}$,
it achieves the optimal rate in Theorem~\ref{thm:main-thm} when $\rho = 1-\Omega(1)$. However, the rate is sub-optimal when $\rho=1-o(1)$. In fact, $s=2$ succeeds for detection when $\rho = 1$ by comparing the difference between all edges.
We will use a new estimator in Subsection~\ref{subsec:mini-least-square} to derive the optimal rate.

\subsection{Detection by Minimal Mean-Squared Error Estimator}\label{subsec:mini-least-square}

In this subsection, we use the test statistic~\eqref{eq:test-statistic} with $f(x,y) = -\frac{1}{2}(x-y)^2$ and  focus on the scenario where $\rho >1-e^{-6}$. Indeed, this estimator is equivalent to minimizing the mean squared error between the induced subgraphs of size  $m $ in  $G_1$  and  $G_2 $, respectively.
Indeed, the expected mean-square error for a correlated pair $\expect{\pth{\beta_e(G_1)-\beta_{\pi^*(e)}(G_2)}^2}$ is $2(1-\rho)$, while it stays bounded away from 1 for an uncorrelated pair. As a result, the choice of $f$ effectively distinguishes between $\maH_0$ and $\maH_1$ under strong signal condition.
We now state the following Proposition.

\begin{proposition}\label{prop: upbd-least-square}
    There exists a universal constant $C_2>0$ such that, for any $1-e^{-6}<\rho<1$ and $\tau = 2\binom{m}{2}(\rho-1)$, if $s^2\ge C_2\pth{\frac{n\log n}{\log\pth{1/(1-\rho)}}\vee n}$,
    \begin{align*}
        \maP\pth{\maT<\tau}+\maQ\pth{\maT\ge \tau}\le 0.1.
    \end{align*}
    Moreover, if $\frac{s^2}{n} = \omega(1)$, $\maP\pth{\maT<\tau}+\maQ\pth{\maT\ge \tau} = o(1)$.
\end{proposition}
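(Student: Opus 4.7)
The plan is to specialize the general recipe above to $f(x,y)=-\tfrac{1}{2}(x-y)^2$, under which $\en(\maH_\pi^f)$ becomes a rescaled chi-squared. Under $\maH_0$, for any $\pi\in\maS_{s,m}$ the differences $\beta_e(G_1)-\beta_{\pi(e)}(G_2)$ are i.i.d.\ $\maN(0,2)$, so $\en(\maH_\pi^f)\stackrel{d}{=}-W'$ with $W'\sim\chi^2_{\binom{m}{2}}$. Under $\maH_1$, after restricting to the common vertex set, $\beta_e(G_1)-\beta_{\pi_m^*(e)}(G_2)\sim\maN(0,2(1-\rho))$ independently, so $\en(\maH_{\pi_m^*}^f)\stackrel{d}{=}-(1-\rho)W$ with $W\sim\chi^2_{\binom{m}{2}}$. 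The threshold $\tau=2\binom{m}{2}(\rho-1)$ therefore lies strictly between the null mean $-\binom{m}{2}$ and the alternative mean $-(1-\rho)\binom{m}{2}$, with a relative gap governed by $\log(1/(1-\rho))$ in the large-deviation regime.

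For the Type I error, I would apply the union bound $\maQ(\maT\ge\tau)\le|\maS_{s,m}|\cdot\maQ(\en(\maH_\pi^f)\ge\tau)$, use $|\maS_{s,m}|\le s^{2m}$, and invoke the standard $\chi^2$ lower-tail Chernoff bound at level $a=2(1-\rho)$ to obtain $\maQ(W'\le 2\binom{m}{2}(1-\rho))\le(2e(1-\rho))^{\binom{m}{2}/2}$. Taking logs, the exponent is at most $2m\log s-\tfrac{m(m-1)}{4}\qth{\log(1/(1-\rho))-1-\log 2}$. The hypothesis $\rho>1-e^{-6}$ makes the bracketed quantity at least a constant multiple of $\log(1/(1-\rho))$; together with $m=(1-\epsilon)s^2/n$ and $s^2\ge C_2\,n\log n/\log(1/(1-\rho))$, taking $C_2$ large enough forces the exponent below $-1$.

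For the Type II error, the decomposition in \eqref{eq:event_subseteq} reduces the problem to bounding $\maP(|S_{\pi^*}|<m)+\maP(\en(\maH_{\pi_m^*}^f)<\tau\mid|S_{\pi^*}|\ge m)$. The first term is $o(1)$ by Lemma~\ref{lem:hypergro-def} together with a standard Hypergeometric concentration inequality, since $\expect{|S_{\pi^*}|}=s^2/n\ge m/(1-\epsilon)$ under $s^2\ge C_2 n$. The second event is $\{W>2\binom{m}{2}\}$, in which $W$ exceeds twice its mean, and the $\chi^2$ upper-tail Chernoff bound yields a decay of order $(2/e)^{\binom{m}{2}/2}$, which is small once $\binom{m}{2}\ge 1$, i.e., $s^2/n\ge 2/(1-\epsilon)$.

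The main obstacle is the bookkeeping needed to combine the two regime conditions into the $n\log n/\log(1/(1-\rho))\vee n$ rate: the logarithmic factor absorbs the Type I entropic cost of maximizing over $\maS_{s,m}$, for which we need the $\chi^2$ large-deviation gap of order $\log(1/(1-\rho))$ to dominate, while the isolated $n$ term is what secures $m\ge 2$ and makes both the Hypergeometric and $\chi^2$ upper-tail concentrations nontrivial. For the ``moreover'' strengthening under $s^2/n=\omega(1)$, the same scheme applies with $m\to\infty$ and a slightly sharper count $|\maS_{s,m}|\le s^{2m}/m!$; since $\log(1/(1-\rho))>6$ is forced by the hypothesis, tuning $\epsilon$ accordingly makes each error term $o(1)$.
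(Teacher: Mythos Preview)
Your approach matches the paper's: both handle Type~I via a union bound over $\maS_{s,m}$ combined with a $\chi^2$ lower-tail estimate (the paper computes the MGF $\expect{e^{-\lambda(X-Y)^2/2}}=(1+2\lambda)^{-1/2}$ and applies Chernoff with $\lambda=\tfrac{1}{4(1-\rho)}-\tfrac{1}{2}$, which is equivalent to your direct $\chi^2$ bound), and both handle Type~II by the decomposition~\eqref{eq:event_subseteq}, Hypergeometric concentration for $|S_{\pi^*}|$, and the $\chi^2$ upper-tail bound on $\{W>2\binom{m}{2}\}$.

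One point to tighten in the ``moreover'' clause: it must be read as an \emph{additional} hypothesis layered on top of $s^2\ge C_2\bigl(\tfrac{n\log n}{\log(1/(1-\rho))}\vee n\bigr)$, not as a standalone replacement. Under the main hypothesis the Type~I error is already $o(1)$ (the union-bound exponent is at most $-\epsilon m\log n$); the extra assumption $s^2/n=\omega(1)$ is only there to upgrade the Type~II bound from $\le 0.05$ to $o(1)$. Your last paragraph reads as though you intend to recover the Type~I $o(1)$ conclusion from $s^2/n=\omega(1)$ and $\log(1/(1-\rho))>6$ alone, but that would require $m-1\gtrsim\log n$, which $m\to\infty$ by itself does not guarantee. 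No new ingredient is needed---just invoke the Type~I bound you already established under the main hypothesis.
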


We refer readers to Appendix~\ref{apd:upbd-least-square} for the detailed proof. Proposition~\ref{prop: upbd-least-square} provides sufficient conditions on strong and weak detection when $\rho$ is close to 1. This result fills the gap for the optimal rate of $s$ in Proposition~\ref{prop: upbd-overlap} when $\rho = 1-o(1)$. In view of  Propositions~\ref{prop: upbd-overlap} and~\ref{prop: upbd-least-square}, we note that $\rho^2 \asymp \log\pth{1/(1-\rho^2)}$ when $0<\rho\le 1-e^{-6}$ and $\log\pth{1/(1-\rho)}\asymp \log\pth{1/(1-\rho^2)}$ when $1-e^{-6}<\rho<1$. Then, there exists a universal constant $\overline{C}\ge C_1\vee C_2$ such that \begin{align*}
    \frac{\overline{C}}{\log\pth{1/(1-\rho^2)}}\ge \begin{cases}
        \frac{C_1}{\rho^2} &\text{if }0<\rho\le 1-e^{-6}\\
        \frac{C_2}{\log\pth{1/(1-\rho)}}&\text{if }1-e^{-6}<\rho<1
    \end{cases}.
\end{align*}
We note that $\frac{\overline{C} n\log n}{\rho^2}=\overline{C}\pth{\frac{n\log n}{\rho^2}\vee n} $ in Proposition~\ref{prop: upbd-overlap}, and thus proving the possibility results in Theorem~\ref{thm:main-thm}.  

\begin{remark}
    The possibility results can be extended to sub-Gaussian assumption on the weighted edges. The bound for the moment generating function holds under the sub-Gaussian assumption, and consequently, the Chernoff bound remains valid.
    See Remark~\ref{rmk:subgaussian} for more details.
\end{remark}

\begin{remark}
    In the previous work \cite{wu2023testing} on the correlated Gaussian Wigner model, the correlation exists over the entire graph. The maximal overlap estimator and the minimal mean-square error estimator over two graphs are equivalent since the sum of squares of the weighted edges is fixed. However, in our sampling model, the sum of squares of the weighted edges in the two subgraphs are random variables, and thus the two estimators differ. Indeed, the Maximum Likelihood Estimator (MLE) is $\max_{\pi\in \maS_{s,|S_{\pi^*}|}} \en\pth{\maH_\pi^f}$ with $f(x,y) = -\rho^2(x^2+y^2)+2\rho xy$, where $f(x,y)\asymp \rho xy$ when $\rho = 1-\Omega(1)$ and $f(x,y)\asymp -(x-y)^2$ when $\rho = 1-o(1)$. 
    The choice of different estimators reflects the use of MLE under different regimes. See~\eqref{eq:likelihood-func} in Appendix~\ref{apdsec:proof-of-partial-orbit} for details.
\end{remark}

\section{Impossibility Results}\label{sec:lwbd}
In this section, we establish the impossibility results for the detection problem, which provides a lower bound on the optimal sample complexity. We first present an overview of the proof. Recall that $S_{\pi^*}$ and $T_{\pi^*}$ are the sets of common vertices defined in~\eqref{eq:def_of_S*} and~\eqref{eq:def_of_T*}, respectively.
Under our sampling model, there exists a latent mapping between $S_{\pi^*}$ and $T_{\pi^*}$ under the hypothesis $\maH_1$.
When equipped with the additional knowledge of the common vertex sets, our problem can be reduced to detection with full observations on smaller correlated Gaussian Wigner model, the detection threshold for which is established in~\cite{wu2023testing}.
As shown in Lemma~\ref{lem:hypergro-def}, the size of $S_{\pi^*}$ and $T_{\pi^*}$ follows a hypergeometric distribution. Using the concentration inequality~\eqref{eq:concentration_for_hyper_2}, the size of $S_{\pi^*}$ satisfies $|S_{\pi^*}|\le (1+\epsilon) \expect{|S_{\pi^*}|}$ with high probability.
Therefore, the impossibility results from the previous work on full observations remain valid
when the number of correlated nodes is substituted with $(1+\epsilon)\expect{|S_{\pi^*}|}$.
However, such a reduction only proves tight when the correlation is weak. We will establish the remaining regimes by the conditional second moment method. 

For notational simplicity, we use $\TV(\maP,\maQ)$ to denote $\TV(\maP(G_1,G_2),\maQ(G_1,G_2))$ in this paper.
By \cite[Equation 2.27]{tsybakov2009introduction}, the total variation distance between $\maP $ and $\maQ $ can be upper bounded by the second moment: \begin{align}\label{eq:TV-tsy-upbd}
    \TV\pth{\maP,\maQ} \le \sqrt{\mathbb{E}_\maQ{\pth{\frac{\maP}{\maQ}}^2}-1}. 
\end{align}
The likelihood ratio is defined as 
\begin{align}
    \frac{\maP(G_1,G_2)}{\maQ(G_1,G_2)} = \frac{1}{n!} \sum_{\pi\in \maS_n} \frac{\maP(G_1,G_2|\pi)}{\maQ(G_1,G_2)}\label{eq:def_of_likeli_ratio},
\end{align}
 where $\maS_n$ denotes the set of  mappings $\pi:V(\mathbf{G}_1)\mapsto V(\mathbf{G}_2)$ between two original graphs.
Note that sometimes certain rare events under $\maP$ can cause the unconditional second moment to explode, while $\TV\pth{\maP,\maQ}$ remains bounded away from one. To circumvent such catastrophic events, we can compute the second moment conditional on such events. 

We consider the following event: 
\begin{align}\label{eq:condition1}
    \maE\triangleq \sth{(G_1,G_2,\pi):\vert \pi(V(G_1))\cap V(G_2)\vert \le \frac{(1+\epsilon) s^2}{n}}.
\end{align}
By Lemma~\ref{lem:hypergro-def}, the size of common vertex set $\vert \pi(V(G_1))\cap V(G_2)\vert$ follows hypergeometric distribution $\HG(n,s,s)$ under $\maP$.
In this paper, we define the conditional distribution as $\maP'(G_1,G_2,\pi) = \maP\pth{G_1,G_2,\pi|\maE}$. By Lemma~\ref{lem:Hypergeometric_distribution_prop}, we have $\maP\pth{\maE} = o(1)$ when $s=\omega\pth{n^{1/2}}$. Using $\TV(\maP,\maQ)\le \TV(\maP',\maQ)+o(1)$ and applying~\eqref{eq:TV-tsy-upbd} on $\maP'$ and $\maQ$ yields that a sufficient condition for $\TV(\maP,\maQ) = o(1)$ is $\mathbb{E}_\maQ\pth{\frac{\maP'}{\maQ}}^2 = 1+o(1)$. See~\eqref{eq:condi-second-intro} for more details.


\subsection{Weak correlation}
In this subsection, we show the impossibility results for weak correlation case where $0<\rho^2<n^{-1/2}$.
\begin{proposition}\label{prop:lwbd-sparse}
    For any $0<\rho^2<n^{-1/2}$, if $s^2 \le \frac{n \log n}{2\log\pth{1/(1-\rho^2)}}$, then $\TV(\maP,\maQ) =o(1)$.
\end{proposition}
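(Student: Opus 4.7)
The plan is to reduce the sampled-subgraph detection problem to the full-observation correlation detection problem considered in \cite{wu2023testing}, by treating the common-vertex structure as side information revealed to the observer. Couple the two hypotheses so that a uniformly random bijection $\pi^*:V(\mathbf{G}_1)\to V(\mathbf{G}_2)$ is drawn under both (with no effect on the observation under $\maH_0$); then the overlap sets $S_{\pi^*},T_{\pi^*}$ have identical marginal laws under $\maP$ and $\maQ$, so by the data processing inequality together with the chain rule for total variation,
\begin{align*}
\TV(\maP,\maQ)\le\mathbb{E}\!\left[\TV\!\bigl(\maP(G_1,G_2\mid S_{\pi^*},T_{\pi^*}),\,\maQ(G_1,G_2\mid S_{\pi^*},T_{\pi^*})\bigr)\right].
\end{align*}
Given $(S_{\pi^*},T_{\pi^*})$ with $|S_{\pi^*}|=k$, the edges of $G_1,G_2$ outside $\binom{S_{\pi^*}}{2}\cup\binom{T_{\pi^*}}{2}$ have identical conditional distributions under the two hypotheses, while $\pi^*|_{S_{\pi^*}}$ is uniformly distributed over the $k!$ bijections $S_{\pi^*}\to T_{\pi^*}$. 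Hence the conditional TV equals that of the $k$-vertex correlated Gaussian Wigner detection problem with full observation, whose sharp impossibility threshold is exactly the content of \cite{wu2023testing}.

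Next I would control the overlap size $k=|S_{\pi^*}|$ via Lemma~\ref{lem:hypergro-def} together with the hypergeometric concentration inequality, obtaining $k\le (1+\epsilon)s^2/n$ with probability $1-o(1)$. On this event, the standing hypothesis $s^2\le \frac{n\log n}{2\log(1/(1-\rho^2))}$ yields $k\log(1/(1-\rho^2))\le (1+\epsilon)\log n/2$. The weak-correlation assumption $\rho^2<n^{-1/2}$ then guarantees that whenever $k$ is large enough for the conditional TV to be non-trivial, $k$ is polynomial in $n$ and therefore $\log k=\Theta(\log n)$. Plugging this into the \cite{wu2023testing} impossibility condition, which is of the form ``$k\log(1/(1-\rho^2))$ strictly below a threshold proportional to $\log k$'', leaves a constant-factor slack that absorbs the $(1+\epsilon)$ factor and certifies $\TV(\maP,\maQ)=o(1)$.

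The main obstacle I anticipate is reconciling the two boundary regimes of $k$ into a single clean argument. For $k$ polynomial in $n$, the sharp impossibility of \cite{wu2023testing} carries the proof immediately, while for very small $k$ (where $\log k\ll \log n$) the \cite{wu2023testing} threshold becomes too weak and must be replaced by a direct Pinsker estimate based on the KL divergence $\binom{k}{2}\cdot\frac{1}{2}\log(1/(1-\rho^2))$ for the easier known-bijection problem, which is itself $o(1)$ in the regime $\rho^2<n^{-1/2}$ as long as $k$ is small. A careful case split, or equivalently a uniform-in-$k$ version of \cite{wu2023testing}'s threshold, is the key technical ingredient to stitch these two regimes together and close the argument with the stated constant $\tfrac{1}{2}$.
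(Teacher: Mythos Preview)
Your proposal is correct and follows the same route as the paper: reveal $(S_{\pi^*},T_{\pi^*})$ as side information via a coupling, reduce the conditional problem to the size-$k$ full-observation Gaussian Wigner detection of \cite{wu2023testing}, and bound $k$ by hypergeometric concentration. The anticipated case split between small and large $k$ is unnecessary: the paper verifies the \cite{wu2023testing} condition $k/\log k \le 2/\rho^2$ uniformly over all $k\le(1+\epsilon)s^2/n$ by first deducing $k\le \tfrac{2\log(1/\rho^2)}{\rho^2}$ (via $\log(1/(1-\rho^2))\ge\rho^2$ and $\log(1/\rho^2)>\tfrac12\log n$ from $\rho^2<n^{-1/2}$) and then using monotonicity of $x/\log x$ on $[e,\infty)$, so no separate Pinsker argument is needed.
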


We note that the total variation distance monotonically increases by the sample size $s$.
In view of Proposition~\ref{prop:lwbd-sparse}, we only need to tackle with the situation $s^2 = \frac{n \log n}{2\log\pth{1/(1-\rho^2)}}$, where $s=\omega\pth{n^{1/2}}$ since $\rho^2<n^{-1/2}$. 
 Therefore, a sufficient condition for $\TV(\maP,\maQ) = o(1)$ is $\TV\pth{\maP',\maQ} = o(1)$ by the triangle inequality. The proof of $\TV(\maP', \maQ) = o(1)$ can be reduced to the lower bound in \cite{wu2023testing} using a data processing inequality when given the common vertex sets $S_{\pi^*}$ and $T_{\pi^*}$. 
 Under weak correlation, the bottleneck is detecting the existence of latent mapping $\pi^*$. The detection is impossible even with the additional knowledge on the location of common vertices.
 The detailed proof of Proposition~\ref{prop:lwbd-sparse} is deferred to Appendix~\ref{apd:lwbd-aparse}.

\subsection{Strong correlation}
In this subsection, we present the impossibility results for strong correlation graphs where $n^{-1/2}\le \rho^2 <1$.
Let $\ti{\pi}$ be an independent copy of $\pi$. A key ingredient in the analysis of conditional second moment is the analysis of $\frac{\maP\pth{G_1,G_2|\pi}}{\maQ(G_1,G_2)} \frac{\maP\pth{G_1,G_2|\ti{\pi}}}{\maQ(G_1,G_2)}$.
We refer readers to Appendix~\ref{apd:lwbd-dense} for the details.



 We then analyze the terms $\frac{\maP(G_1,G_2|\pi)}{\maQ\pth{G_1,G_2}}$ and $\frac{\maP(G_1,G_2|\ti{\pi})}{\maQ\pth{G_1,G_2}}$. Recall the common vertex sets $S_{\pi}$ and $T_{\pi}$ defined in~\eqref{eq:def_of_S*} and~\eqref{eq:def_of_T*}, respectively. For any $e\notin \binom{S_\pi}{2}$ and $e'\notin \binom{T_\pi}{2}$, $\beta_e(G_1)$ and $\beta_{e'}(G_2)$ are independent under $\maP$, while under the null hypothesis distribution $\maQ$ they are also independent.
Therefore, the term $\frac{\maP(G_1,G_2|\pi)}{\maQ(G_1,G_2)}$ can be decomposed into  $\prod_{e\in \tbinom{S_\pi}{2}} \ell(\beta_e(G_1),\beta_{\pi(e)}(G_2))$,
where $ \ell(a,b)\triangleq \frac{\maP\pth{\beta_e(G_1) = a,\beta_{\pi(e)}(G_2) = b}}{\maQ\pth{\beta_e(G_1) = a,\beta_{\pi(e)}(G_2) = b}}$
for any $a,b\in \mathbb{R}$ is the ratio of density functions. 
We note that there are correlations between $\binom{S_\pi}{2},\binom{S_{\ti{\pi}}}{2},\binom{T_\pi}{2}$ and $\tbinom{T_{\ti{\pi}}}{2}$,  yielding that 
$\frac{\maP(G_1,G_2|\pi)}{\maQ(G_1,G_2)}$ and $\frac{\maP(G_1,G_2|\ti{\pi})}{\maQ(G_1,G_2)}$  are correlated. To deal with the correlation, our main idea is to decompose the edge sets into independent parts. 
To formally describe all correlation relationships, we use the \emph{correlated functional digraph} of two mappings $\pi$ and $\ti{\pi}$ between a pair of graphs introduced in~\cite{huang2024information}.

\begin{definition}[Correlated functional digraph]\label{def:correlated-functional}
    Let $\pi$ and $\ti{\pi}$ be two bijective mappings between $V(\mathbf{G}_1)$ and $V(\mathbf{G}_2)$ and $S_{\pi},T_{\pi},S_{\ti{\pi}},T_{\ti{\pi}}$ be the sets of common vertex defined in~\eqref{eq:def_of_S*} and~\eqref{eq:def_of_T*}.
    The correlated functional digraph of the functions $\pi$ and $\ti{\pi}$ is constructed as follows. Let the vertex sets be $\binom{S_\pi}{2}\cup \binom{S_{\ti{\pi}}}{2}\cup \binom{T_\pi}{2}\cup \binom{T_{\ti{\pi}}}{2}$. We first add every edge $e\mapsto \pi(e)$ for $e\in \binom{S_\pi}{2}$, and then merge each pair of nodes $(e,\ti{\pi}(e))$ for $e\in \binom{S_{\ti{\pi}}}{2}$ into one node.
\end{definition}

After merging all pairs of nodes, the degree of each vertex in the correlated functional digraph is at most two. Therefore, the connected components of the correlated functional digraph consist of paths and cycles. 
For example, for a path $(e_1,\pi(e_1),\cdots, e_j, \pi(e_j))$, where $e_1,\cdots,e_j$ are edges in $G_1$, we have $\ti{\pi}(e_2) = \pi(e_1),\cdots, \ti{\pi}(e_j) = \pi(e_{j-1})$; for a cycle $(e_1,\pi(e_1),\cdots, e_j, \pi(e_j))$, we have $\ti{\pi}(e_2) = \pi(e_1),\cdots, \ti{\pi}(e_j) = \pi(e_{j-1}), \ti{\pi}(e_1) = \pi(e_j)$. By decomposing the connected components, the analysis of edge sets is separated into independent parts.
Let $\sfP$ and $\sfC$ denote the collections of vertex sets belonging to different connected paths and cycles, respectively. For any $P\in \sfP$ and $C\in \sfC$, we define $\ell_e^\pi(G_1,G_2) = \ell\pth{\beta_e(G_1),\beta_{{\pi}(e)}(G_2)}$ and
\begin{align*}
    L_P \triangleq&\prod_{e\in \binom{S_\pi}{2}\cap P}\ell_e^\pi(G_1,G_2)\prod_{e\in \binom{S_{\ti{\pi}}}{2}\cap P}\ell_e^{\ti{\pi}}(G_1,G_2),\\L_C \triangleq&\prod_{e\in \binom{S_\pi}{2}\cap C}\ell_e^\pi(G_1,G_2)\prod_{e\in \binom{S_{\ti{\pi}}}{2}\cap C}\ell_e^{\ti{\pi}}(G_1,G_2).
\end{align*}


Note that the sets from $\sfP$ and $\sfC$ are disjoint. Consequently, for any $P,P'\in \sfP$ and $C,C'\in \sfC$, $L_P, L_{P'}, L_C$ and $L_{C'}$ are mutually independent. Furthermore, the expectations of $L_P$ and $L_C$ can be derived from the following Lemma.
\begin{lemma}\label{lem:partial_orbit_second_moment}
    For any $P\in \sfP,C\in \sfC$, we have $\mathbb{E}_\maQ\pth{L_P} = 1$ and $\mathbb{E}_\maQ\pth{L_C} =\frac{1}{1-\rho^{2|C|}}$. 
\end{lemma}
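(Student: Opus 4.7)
The plan is to expand each single-edge Gaussian likelihood ratio in the Hermite polynomial basis and exploit orthonormality at every node of the correlated functional digraph. This reduces the expectation to a combinatorial sum that collapses cleanly because the four vertex sets $\tbinom{S_\pi}{2},\tbinom{S_{\ti{\pi}}}{2},\tbinom{T_\pi}{2},\tbinom{T_{\ti{\pi}}}{2}$ live inside independent edge weights under $\maQ$.

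First, I recall the Mehler expansion of the ratio between a bivariate standard Gaussian with correlation $\rho$ and the product of its marginals:
\begin{align*}
    \ell(x,y) = \sum_{k\ge 0} \rho^k h_k(x)\, h_k(y),
\end{align*}
where $\sth{h_k}_{k\ge 0}$ is the orthonormal Hermite basis of $L^2(\maN(0,1))$, so that for $Z\sim\maN(0,1)$ we have $\expect{h_j(Z)h_k(Z)} = \delta_{jk}$ and, in particular, $\expect{h_k(Z)} = \delta_{k,0}$. Under $\maQ$ every weighted edge of $G_1$ and $G_2$ is an independent standard normal, so each node of the (merged) correlated functional digraph carries exactly one such independent Gaussian, and each directed digraph edge corresponds to exactly one factor of $L_P$ or $L_C$: either $\ell\pth{\beta_e(G_1),\beta_{\pi(e)}(G_2)}$ for $e\in\tbinom{S_\pi}{2}$ or $\ell\pth{\beta_e(G_1),\beta_{\ti{\pi}(e)}(G_2)}$ for $e\in\tbinom{S_{\ti{\pi}}}{2}$.

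Substituting the Mehler expansion into each factor turns $\mathbb{E}_\maQ\qth{L_P}$ and $\mathbb{E}_\maQ\qth{L_C}$ into sums, indexed by non-negative integer labels on the digraph edges, of products of local per-node expectations, by independence of the node-Gaussians. By the orthonormality identities above, each degree-$2$ internal node enforces equality of its two incident indices and contributes $1$; each degree-$1$ endpoint forces its incident index to be $0$ and contributes $1$; any mismatch kills the term. For a path $P$, the two endpoint constraints plus the equality propagation through internal nodes force every label on $P$ to equal $0$, so only the all-zero assignment survives and $\mathbb{E}_\maQ\qth{L_P} = \rho^0 = 1$. For a cycle $C$, the equality constraints all the way around the cycle force a single common label $k\ge 0$; since $C$ contains $|C|$ edges of $G_1$, each contributing one $\pi$-factor and one $\ti{\pi}$-factor, there are $2|C|$ digraph edges in $C$, the $k$-term contributes $\rho^{2k|C|}$, and summing the geometric series yields $\mathbb{E}_\maQ\qth{L_C} = \sum_{k\ge 0}\rho^{2k|C|} = \frac{1}{1-\rho^{2|C|}}$.

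The main (mild) obstacle is the bookkeeping of which Hermite factors are attached at each merged node: a single merged node of the form $(e,\ti{\pi}(e))$ can absorb several $\ell$-factors coming from both the $\pi$- and $\ti{\pi}$-products, and one must verify that the effective incidence degree in the digraph (one or two) correctly matches the number of Hermite factors at the corresponding Gaussian variable under $\maQ$. Once that incidence structure is set up carefully, the orthonormality collapse above is automatic and gives the claimed identities.
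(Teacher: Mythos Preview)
Your approach is correct and takes a genuinely different route from the paper. The paper computes both expectations by direct Gaussian integration: it writes out the closed form $\ell(a,b)=\frac{1}{\sqrt{1-\rho^2}}\exp\bigl(\frac{-\rho^2(a^2+b^2)+2\rho ab}{2(1-\rho^2)}\bigr)$, rewrites $L_P$ and $L_C$ as a chain $\ell(B_0,B_1)\cdots\ell(B_{k-1},B_k)$ and a cycle $\ell(B_1,B_2)\cdots\ell(B_k,B_1)$ in i.i.d.\ standard normals, and evaluates the resulting integrals by the linear change of variables $B_t\mapsto B_t-\rho B_{t+1}$; for the cycle this reduces to the circulant determinant $\det\mathbf{J}_k=1-\rho^k=1-\rho^{2|C|}$. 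Your Mehler/Hermite argument replaces that analytic computation by a combinatorial one: orthonormality at degree-one endpoints forces every index on a path to zero, and at degree-two nodes forces a single common index around a cycle, after which the geometric series in $\rho^{2|C|}$ gives the answer. Your route is more conceptual and transfers verbatim to any edge model whose single-edge likelihood ratio diagonalizes in an orthonormal basis (e.g.\ the correlated Erd\H{o}s--R\'enyi case via the two-point basis), whereas the paper's route is a short self-contained Gaussian calculation requiring no special identities.

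Two small points to tighten. First, your identification of $\ell$-factors with edges of the correlated functional digraph is slightly off: in Definition~\ref{def:correlated-functional} only the $\pi$-arrows are digraph edges, and a merged node $(e,\ti\pi(e))$ actually carries \emph{two} independent Gaussians under $\maQ$, namely $\beta_e(G_1)$ and $\beta_{\ti\pi(e)}(G_2)$. The graph on which your orthonormality collapse operates is really the auxiliary graph whose vertices are the individual Gaussian variables and whose edges are the individual $\ell$-factors; this is precisely the chain/cycle $\ell(B_0,B_1)\cdots$ the paper writes down, with $2|C|$ links on a cycle, so your count and conclusion are correct once phrased on that graph. Second, to make the interchange of the infinite Hermite sums with the expectation rigorous, add one line: truncate each Mehler series at degree $N$, use $L^2$-convergence of the partial sums together with Cauchy--Schwarz across the finitely many factors, and let $N\to\infty$; this is routine for $|\rho|<1$.
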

By Lemma~\ref{lem:partial_orbit_second_moment} and the joint independence between different paths and cycles, we have
\begin{align}
    \nonumber&\mathbb{E}_\maQ\qth{\frac{\maP(G_1,G_2|\pi)}{\maQ(G_1,G_2)} \frac{\maP(G_1,G_2|\ti{\pi})}{\maQ(G_1,G_2)}} \\=& \mathbb{E}_\maQ \qth{\prod_{P\in \sfP} L_P \prod_{C\in \sfC} L_C} = \prod_{C\in \sfC} \pth{\frac{1}{1-\rho^{2|C|}}}.\label{eq:second_moment_kappa}
\end{align}
 The cycles set $\sfC$ plays a key role in the analysis of conditional second moment. In order to analyze the properties of $\sfC$ in depth,
for any $\pi$ and $\ti{\pi}$, we define the \emph{core set} as
\begin{align}\label{eq:def_I^*}
    I^* \triangleq I^*(\pi,\ti{\pi}) \triangleq \cup_{C\in \sfC}\cup_{e\in C}\cup_{v\in V(e)\cap V(G_1)} v,
\end{align}
where $V(e)$ denotes the two vertices of edge $e$. Indeed, $I^*$ is the intersection set between $V(G_1)$ and all the vertices of edges in cycle set $\sfC$. In fact, the quantity $\prod_{C\in \sfC}\pth{\frac{1}{1-\rho^{2|C|}}}$ relies significantly on $I^*$.
We then show the following lemma on the properties of $I^*$.
\begin{lemma}[Properties of the \emph{core set}]\label{lem:property_of_I}
    For $I^*$ in \eqref{eq:def_I^*} and any $t\le s$, we have
    \begin{align*}
        I^* = \underset{I\subseteq V(G_1), \pi(I) = \ti{\pi}(I)}{\mathrm{argmax}} |I|,\quad \prob{|I^*| = t}\le \pth{\frac{s}{n}}^{2t}.
    \end{align*}
\end{lemma}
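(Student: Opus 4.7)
The plan is to reformulate both claims via the composite bijection $\sigma \triangleq \ti{\pi}^{-1} \circ \pi$, which is a bijection on $V(\mathbf{G}_1)$ and therefore decomposes into disjoint cycles. For any $I \subseteq V(\mathbf{G}_1)$, the identity $\pi(I) = \ti{\pi}(I)$ is equivalent to $\sigma(I) = I$, i.e., $I$ is a union of $\sigma$-cycles. For the first claim, I would trace the cycles of the correlated functional digraph: following $e \to \pi(e) \sim \sigma(e) \to \pi(\sigma(e)) \sim \sigma^2(e) \to \cdots$ through the merging operation, a length-$k$ cycle amounts to a sequence $e_j = \sigma^j(e_0)$ with $e_j \in \binom{S_\pi}{2}$ for all $j = 0, \ldots, k-1$ and $\sigma^k(e_0) = e_0$. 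At the vertex level this forces each vertex of $I^*$ to lie on a $\sigma$-cycle entirely contained in $S_\pi$, so $I^*$ itself is $\sigma$-invariant and $I^* \subseteq S_\pi \subseteq V(G_1)$, verifying feasibility. Conversely, any feasible $I$ is $\sigma$-invariant and contained in $S_\pi$ (using that $\pi(I) = \ti{\pi}(I) \subseteq V(G_2)$), hence a union of $\sigma$-cycles in $S_\pi$ and therefore a subset of $I^*$.

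For the probability bound, I would combine the first part with a union bound over candidate sets. The event $\sth{|I^*| = t}$ implies the existence of a $t$-subset $I \subseteq V(\mathbf{G}_1)$ with $I \subseteq V(G_1)$ and $\pi(I) = \ti{\pi}(I) \subseteq V(G_2)$, so
\begin{align*}
\prob{|I^*| = t} \le \sum_{I \in \binom{V(\mathbf{G}_1)}{t}} \prob{I \subseteq V(G_1)} \cdot \prob{\pi(I) = \ti{\pi}(I) \subseteq V(G_2)}.
\end{align*}
The first factor equals $\binom{n-t}{s-t}/\binom{n}{s}$ from the uniform sampling of $V(G_1)$. For the second, I would condition on the common value $J \triangleq \pi(I) = \ti{\pi}(I)$ and sum over $J \in \binom{V(\mathbf{G}_2)}{t}$: uniformity and independence of $\pi, \ti{\pi}$ give $\prob{\pi(I) = J}\prob{\ti{\pi}(I) = J} = 1/\binom{n}{t}^2$, while the independent sampling of $V(G_2)$ contributes $\prob{J \subseteq V(G_2)} = \binom{n-t}{s-t}/\binom{n}{s}$. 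Hence the second factor simplifies to $\binom{n-t}{s-t}/\pth{\binom{n}{t}\binom{n}{s}}$, and summing over the $\binom{n}{t}$ choices of $I$ the $\binom{n}{t}$ factors cancel, yielding
\begin{align*}
\prob{|I^*| = t} \le \pth{\binom{n-t}{s-t}/\binom{n}{s}}^2 = \prod_{i=0}^{t-1}\pth{\frac{s-i}{n-i}}^2 \le \pth{\frac{s}{n}}^{2t}.
\end{align*}

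The main obstacle I expect is the structural reduction in the first part: carefully tracking the identifications among $\binom{S_\pi}{2}, \binom{S_{\ti{\pi}}}{2}, \binom{T_\pi}{2}, \binom{T_{\ti{\pi}}}{2}$ under the merging rule and verifying that a length-$k$ cycle of the digraph lifts cleanly to a $\sigma$-orbit of pairs inside $S_\pi$. After that, the probability bound is a short calculation whose one crucial feature is that the entropy factor $\binom{n}{t}$ from choosing $I$ is exactly cancelled by the factor $1/\binom{n}{t}$ forcing the independent $\pi$ and $\ti{\pi}$ to agree on $I$, so that no combinatorial entropy survives in the final bound.
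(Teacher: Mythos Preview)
Your proposal is correct and follows essentially the same strategy as the paper. Introducing $\sigma=\ti\pi^{-1}\circ\pi$ is a clean way to encode the paper's path/cycle dichotomy for the first claim, and your union bound over $\binom{V(\mathbf G_1)}{t}$ (with the random sampling of $V(G_1),V(G_2)$ absorbed into the probabilities) is an equivalent reorganization of the paper's union bound over $\binom{V(G_1)}{t}$ with $V(G_1),V(G_2)$ held fixed---both reduce to $\bigl(\prod_{i=0}^{t-1}\frac{s-i}{n-i}\bigr)^{2}\le (s/n)^{2t}$.
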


We then propose the following Proposition.

\begin{proposition}\label{prop:lwbd-dense}
    For any $n^{-1/2}\le \rho^2<1$, if $s^2 \le \frac{n\log n}{8\log\pth{1/(1-\rho^2)}}$, then $\TV\pth{\maP,\maQ} = o(1)$.
\end{proposition}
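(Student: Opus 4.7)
The plan is to apply the conditional second moment method on the event $\maE$ from~\eqref{eq:condition1}, following the cycle decomposition developed in the paragraphs preceding the proposition. First, monotonicity of $\TV$ in $s$ reduces the problem to the boundary case $s^2=\frac{n\log n}{8\log(1/(1-\rho^2))}$. In the regime $\rho^2\ge n^{-1/2}$ this choice gives $s=\omega(n^{1/2})$, so Lemma~\ref{lem:hypergro-def} together with the concentration of the hypergeometric distribution yields $\maP(\maE^c)=o(1)$, and hence $\TV(\maP,\maP')=o(1)$. By the triangle inequality together with~\eqref{eq:TV-tsy-upbd}, the goal reduces to $\mathbb{E}_\maQ[(\maP'/\maQ)^2]=1+o(1)$.

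Next, I expand the conditional second moment. Squaring the likelihood, conditioning on the random vertex sets so that the edges are i.i.d.\ under $\maQ$, and invoking Lemma~\ref{lem:partial_orbit_second_moment} as in~\eqref{eq:second_moment_kappa} yields
\[
\mathbb{E}_\maQ\qth{\pth{\frac{\maP'}{\maQ}}^2}=\frac{1}{\maP(\maE)^2}\mathbb{E}\qth{\indc{\maE(\pi)\cap\maE(\ti\pi)}\prod_{C\in\sfC}\frac{1}{1-\rho^{2|C|}}},
\]
where $\pi,\ti\pi$ are i.i.d.\ uniform random permutations. I then bound the cycle product via the elementary inequality $(1-x)^k\le 1-x^k$ for $x\in(0,1)$ and integer $k\ge 1$, which gives $\frac{1}{1-\rho^{2|C|}}\le (1-\rho^2)^{-|C|}$. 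Since~\eqref{eq:def_I^*} implies that every $G_1$-edge in any cycle has both endpoints in the core set $I^*$, the $G_1$-edges participating in cycles are contained in $\binom{I^*}{2}$; hence $\sum_{C\in\sfC}|C|\le\binom{|I^*|}{2}$ and therefore $\prod_{C\in\sfC}\frac{1}{1-\rho^{2|C|}}\le(1-\rho^2)^{-\binom{|I^*|}{2}}$.

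Combining with the observation $|I^*|\le\min(|S_\pi|,|S_{\ti\pi}|)\le(1+\epsilon)s^2/n$ under $\maE(\pi)\cap\maE(\ti\pi)$ and with the tail bound of Lemma~\ref{lem:property_of_I} reduces the second moment to
\[
\mathbb{E}_\maQ[(\maP'/\maQ)^2]\le(1+o(1))\sum_{t=0}^{(1+\epsilon)s^2/n}\pth{\frac{s}{n}}^{2t}(1-\rho^2)^{-\binom{t}{2}}.
\]
Writing $L=\log(n/s)$ and $R=\log(1/(1-\rho^2))$ so that the $t$-th summand equals $e^{-2tL+\binom{t}{2}R}$, the hypothesis $s^2 R\le n\log n/8$ combined with $R\ge\rho^2\ge n^{-1/2}$ forces $s=O(n^{3/4}\sqrt{\log n})$, giving $L=(\tfrac14+o(1))\log n$. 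A short calculation then shows $(t-1)R/2\le L$ for every $t$ in the admissible range, so each $t\ge 1$ summand is at most $e^{-tL}=(s/n)^t$, and the geometric tail $\sum_{t\ge 1}(s/n)^t=O(s/n)=o(1)$ closes the argument.

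The main obstacle I anticipate is justifying the combinatorial bound $\sum_{C\in\sfC}|C|\le\binom{|I^*|}{2}$: this requires inspecting the correlated functional digraph of Definition~\ref{def:correlated-functional} to check that every $G_1$-edge appearing in a cycle has both endpoints in the argmax set $I^*$ characterized in Lemma~\ref{lem:property_of_I}, and that the cycles use disjoint edge sets. The calibration between the polynomial factor $(s/n)^{2t}$ supplied by Lemma~\ref{lem:property_of_I} and the exponential factor $(1-\rho^2)^{-\binom{t}{2}}$ coming from the cycle product is what ultimately produces the sharp $\log n/\log(1/(1-\rho^2))$ rate appearing in the hypothesis on $s^2$.
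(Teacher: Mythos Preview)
Your proposal follows the paper's proof essentially step by step: condition on the overlap event $\maE$, square the likelihood via an independent copy $\ti\pi$, apply the cycle formula~\eqref{eq:second_moment_kappa}, bound $\prod_C(1-\rho^{2|C|})^{-1}$ by $(1-\rho^2)^{-\binom{|I^*|}{2}}$, and then sum against the tail bound of Lemma~\ref{lem:property_of_I}. The paper actually establishes the \emph{equality} $\sum_{C\in\sfC}|C|=\binom{|I^*|}{2}$ (every edge in $\binom{I^*}{2}$ lies on a cycle because $\pi(I^*)=\ti\pi(I^*)$ by Lemma~\ref{lem:property_of_I}), so your anticipated obstacle is already dealt with there. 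Your endgame differs only cosmetically: the paper bounds the ratio $a_{t+1}/a_t\le n^{-1/4}$ and sums geometrically, whereas you bound each summand by $(s/n)^t$; both yield $1+o(1)$.

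One point to flag: the assertion that ``this choice gives $s=\omega(n^{1/2})$'' is not true in general. At the boundary $s^2=\tfrac{n\log n}{8R}$ with $R=\log\tfrac{1}{1-\rho^2}$, one only has $s=\omega(n^{1/2})$ when $R=o(\log n)$; if for instance $\rho^2=1-n^{-c}$ then $R=c\log n$ and $s^2=\Theta(n)$, so the hypergeometric concentration does not yield $\maP(\maE^c)=o(1)$ for fixed $\epsilon$. The paper is equally casual here (it writes the $(1+o(1))$ prefactor assuming $\maP(\maE)=1-o(1)$ without isolating this regime), so this is a shared subtlety rather than a defect in your reading. Also, ``$L=(\tfrac14+o(1))\log n$'' should be ``$L\ge(\tfrac14+o(1))\log n$''---at the boundary $L=\tfrac12\log(8nR/\log n)$, which equals $(\tfrac14+o(1))\log n$ only at the extreme $R\asymp n^{-1/2}$ and is larger otherwise---but since you use $L$ solely as a lower bound the argument is unaffected.
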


 The detailed proof of Proposition~\ref{prop:lwbd-dense} is deferred to Appendix~\ref{apd:lwbd-dense}.
 In the proof, we apply the conditional second moment method with the conditional distribution $\maP' = \maP(\cdot |\maE)$, where $\maE$ is defined in~\eqref{eq:condition1}. The analysis of the conditional second moment relies significantly on the decomposition of cycles and paths of a correlated functional digraph. By Lemma~\ref{lem:partial_orbit_second_moment}, the conditional second moment can be reduced to the calculation on cycles, while the vertex set induced by all cycles is exactly $I^*$. Combining this with the properties of $I^*$ in Lemma~\ref{lem:property_of_I}, we finish the proof of Proposition~\ref{prop:lwbd-dense}. In fact, under the strong correlation condition, detecting $\pi^*$ is no longer the bottleneck. We instead use a more delicate analysis based on the conditional second moment method.

By~\eqref{eq:concentration_for_hyper_2} in Lemma~\ref{lem:Hypergeometric_distribution_prop}, there exists $\underline{C}\le \frac{1}{8}$ such that, when $s^2\le \underline{C} n$, we have $\prob{|S_{\pi^*}| = 0}\ge 0.9$, which implies that $\TV(\maP,\maQ)\le 0.1$. Specifically, when $s^2 = o(n)$, $\prob{|S_{\pi^*}|=0} = 1-o(1)$, and thus $\TV\pth{\maP,\maQ} = o(1)$.
Combining this with Propositions~\ref{prop:lwbd-sparse} and~\ref{prop:lwbd-dense}, we prove the impossibility results in Theorem~\ref{thm:main-thm}.

\begin{remark}
    The second moment under our induced subgraph sampling model is equivalent to that on the vertex set induced by $I^*$. When fixing $I^*$, it is equal to the second moment of correlated Gaussian Wigner model with $\pi:I^*\to \pi(I^*)$. However, $I^* = I^*(\pi,\ti{\pi})$ is a random variable of $\pi,\ti{\pi}$, and hence a more thorough analysis on $I^*$ is needed, as shown in Lemma~\ref{lem:property_of_I}.
\end{remark}

\section{Algorithm}\label{sec:alg}
In this section, we present an efficient algorithm for detection. In Theorem~\ref{thm:main-thm}, we show that the estimator~\eqref{eq:test-statistic} achieves the optimal sample complexity for correlation detection. However, the estimator requires searching over $\maS_{s,m}$, with time complexity $\binom{s}{m}^2 \cdot m!$, resulting in poor performance for large graphs. Next, we propose an efficient algorithm to approximate the estimator in~\eqref{eq:test-statistic}. 

When the full observations of the graphs are given, there are many different efficient algorithms for detecting correlation and recovering graph matching. For instance, it is shown in~\cite{mao2023random, mao2024testing} that counting trees is an efficient way to detect correlation and recover graph matching when the correlation coefficient $\rho>\sqrt{\alpha}$, where $\alpha\approx 0.338$ is Otter's constant introduced in~\cite{otter1948number}. 
The message-passing algorithm~\cite{piccioli2022aligning,ganassali2024statistical} is also an efficient algorithm in the \ER model, which makes substantial use of the local tree structure.
Another approach for graph matching is   relaxing the original problem to a convex optimization problem~\cite{fan2019spectral}. 
Additionally, there are approaches based on initial seeds \cite{mossel2020seeded} and iterative methods \cite{ding2024polynomial} addressing this problem.

However, for the partial alignment problem and partial correlation detection problem, where only part of the original graphs are given, it becomes more challenging to find an efficient algorithm. One approach is to use deep learning techniques  \cite{jiang2022graph, wang2023deep, ratnayaka2024optimal}, while another way is to use low-degree structures, such as  cliques or trees \cite{sharma2018solving}. In this paper, we propose an algorithm that finds the initial seeds by matching the cliques, and then iteratively constructs the remaining mapping.
Three main components of our algorithm are outlined as follows. 

\begin{itemize}
    \item \emph{Match the small cliques.} Given two graphs $G_1$, $G_2$ and integers $K_1, N_1, N_2$ and a bivariate function $f$, we first randomly pick $N_1$ vertex set $V_1,\cdots, V_{N_1}\subseteq V(G_1)$ with $|V_1| = \cdots =|V_{N_1}| = K_1$ and $V_i\neq V_j$, for any $1\le i<j\le N_1$. For any $1\le i\le N_1$, define the injection $\pi_{i}:V_i\mapsto V(G_2)$ as \begin{align}\label{eq:def_of_pi_i}
    \pi_{i} \triangleq \underset{\substack{\pi:V_i\mapsto V(G_2)\\ \pi\text{ injection}}}{\mathrm{argmax}} \sum_{e\in \binom{V_i}{2}} \beta_e\pth{\maH_{\pi}^f},
\end{align}
where $\maH_{\pi_{i}}^f$ is the \emph{$f-$similarity graph} defined in~\eqref{eq:f-intersection}.
We then sort the values $\sum_{e\in \binom{V_i}{2}} \beta_e\pth{\maH_{\pi_i}^f}$ in decreasing order and select the top $N_2$ corresponding pairs of $(V_i, \pi_i)$. Without loss of generality, we assume that $(V_1,\pi_1),\cdots,(V_{N_2},\pi_{N_2})$ are the top $N_2$ pairs.
    \item \emph{Find seeds.} 
    Given an integer $K_2$,
    for any $U\subseteq [N_2]$ with $|U | = K_2$, let $V_U\triangleq \cup_{j\in U} V_j$.
    We say $U$ is \emph{compatible} if for any $v\in V_U$, $\pi_j(v)$'s are identical for all $j\in U$ such that $v\in V_{i_j}$. Let $\maI(U)$ denote the indicator function of \emph{compatible} set $U$. If $\maI(U)=1$, we define $\pi_U$ as the union of $\pi_j$ for any $j\in U$. Specifically, \begin{align*}
        \pi_U(v) = \pi_j(v) \text{ such that }v\in V_{i_j}, \text{ for any } v\in V_U.
    \end{align*} 
    The seed is then defined as \begin{align}\label{eq:seed-mapping}
        \pi_{0} = \underset{\pi_U:\maI(U)=1,U\subseteq [N_2],|U| = K_2}{\mathrm{argmax}} \sum_{e\in \binom{V_U}{2}} \frac{\beta_e\pth{\maH_{\pi_U}^f}}{\binom{|V_U|}{2}},
    \end{align}
    which maximize the average similarity score over $U$. 
    \item \emph{Iteratively construct mappings.} 
    Define the domain set and image set  of $\pi_0$ as $S_0$ and $T_0$, respectively. Then, we have $\pi_0: S_0\subseteq V(G_1)\mapsto T_0\subseteq V(G_2)$.
    Next, we iteratively extend the seed mapping by finding one vertex each from $V(G_1)$ and $V(G_2)$ until $|S_0| = |T_0| = m$. Specifically, given $\pi_0:S_0\mapsto T_0$, let \begin{align*}
        v_1,v_2 = \underset{\substack{v_1\in V(G_1)\backslash S_0\\ v_2\in V(G_2)\backslash T_0}}{\mathrm{argmax}} \sum_{v\in S_0} f\pth{\beta_{v_1 v}(G_1),\beta_{v_2 \pi_0(v)}(G_2)}.
    \end{align*}
    Then, we add the new mapping $v_1\mapsto v_2$ to $\pi_0$. This process is repeated iteratively, updating $\pi_0$ until $|S_0| = m$. Finally, we compute the test statistic $\sum_{e\in \binom{S_0}{2}} \beta_e\pth{\maH_{\pi_0}^f}$. $\maH_0$ is rejected if the test statistic exceeds the given threshold $\tau$, otherwise $\maH_0$ is accepted.
\end{itemize}

The detailed algorithm is shown in Algorithm~\ref{alg:clique-based}. 
Our algorithm comprises three main steps. In the first step, we select $N_1$ vertex sets $V_1,\cdots, V_{N_1}$ of size $K_1$ and search for injections $\pi_i$ from $V_i$ to $V(G_2)$, which requires $O(N_1\cdot s^{K_1})$ time. In the second step, we search over all subsets $U\subseteq [N_2]$ with $|U| = K_2$, which takes $O(N_2^{K_2})$ time. In the third step, we iteratively expand the mapping based on our seeds, which takes $O(m^2 s^2)$ time.
We typically choose $N_1\asymp s^{K_1}$ and $K_1\ge 3$, and thus the overall time complexity of the algorithm is $O(N_1\cdot s^{K_1}+N_2^{K_2})$.

Since only partial correspondence exists between the two graphs under $\maH_1$, finding the true mapping is challenging. 
We first use small cliques of size $K_1$ to trade accuracy for computational efficiency, although this often results in many incorrect mappings. To improve accuracy, we then test the compatibility of these small mappings and merge $K_2$ of them to construct a larger, more accurate mapping. This larger mapping is then used as a seed, and we iteratively enlarge it by adding one pair at a time until the size reaches $m$. This approach significantly reduces the running time compared to directly matching the larger cliques. 

As for the performance, a larger sample size $s$ leads to larger common vertex sets, and thus increases the number of correct mappings in Step 1. A larger $K_1$ corresponds to matching larger cliques in the first step. This increases the proportion of correct mappings within the $N_2$ candidate pairs when $K_1$ is below the size of common vertex sets. However, choosing $K_1$ beyond this size introduces wrong mappings. Besides, in the second step, we search over all $U\subseteq [N_2]$ with $|U| = K_2$ to identify the seeds. While a larger $K_2$ imposes a stricter matching criterion, choosing $K_2$ beyond the number of available correct mappings from Step 1 will degrade performance.


The accuracy and running time depend on $N_1, N_2, K_1$, and $ K_2$, and there is a trade-off between them: larger values of these parameters generally improve accuracy but increase the computational cost.
 

\begin{algorithm}[tb]
\caption{Clique-Based Detection Algorithm}
\label{alg:clique-based}
\begin{algorithmic}[1]
    \STATE{\textbf{Input:}} Two graphs $G_1,G_2$ with $s$ vertices, mapping size $m$, clique size $K_1$, combining size $K_2$, number of cliques $N_1$, number $N_2$, threshold $\tau$.
    \STATE{\textbf{Output:}} Detection result $\maH_0$ or $\maH_1$.
    \STATE Randomly select $N_1$ vertices sets $V_i\subseteq V(G_1)$ with $|V_i| = K_1$, for any $i=1,2,\cdots, N_1$.
    \STATE For each $V_i$, compute $\pi_i$ according to~\eqref{eq:def_of_pi_i}. Then, sort the values $\sum_{e\in \binom{V_i}{2}} \beta_e\pth{\maH_{\pi_i}^f}$ in descending order and select the top $N_2$ corresponding pairs of $(V_i,\pi_i)$. Without loss of generality, denote pairs as $(V_1,\pi_1),\cdots,(V_{N_2},\pi_{N_2})$.
    \STATE Find the seed mapping $\pi_0: S_0\subseteq V(G_1)\mapsto T_0\subseteq V(G_2)$ according to~\eqref{eq:seed-mapping}.
    \WHILE{$|S_0|<m$}
    \FOR{$v_1\in V(G_1)\backslash S_0$ and $v_2\in V(G_2)\backslash T_0$}
    \STATE Compute $\sum_{v\in S_0} f\pth{\beta_{v_1 v}(G_1),\beta_{v_2\pi_0(v)}(G_2)}$.
    \ENDFOR
    \STATE Find the pair $(v_1,v_2)$ for the maximal value of  $\sum_{v\in S_0} f\pth{\beta_{v_1 v}(G_1),\beta_{v_2\pi_0(v)}(G_2)}$ and add $v_1\mapsto v_2$ into $\pi_0$.
    \ENDWHILE
    \STATE Compute $\sum_{e\in \binom{S_0}{2}} \beta_e\pth{\maH_{\pi_0}^f}$, output $\maH_1$ if it exceeds $\tau$, otherwise output $\maH_0$.
\end{algorithmic}
\end{algorithm}

\section{Numerical Experiments}\label{sec:num-experiments}

In this section, we provide numerical results for Algorithm~\ref{alg:clique-based} on synthetic data. To this end, we independently generate 100 pairs of graphs that follow the independent Gaussian Wigner model, and another 100 pairs that follow the correlated Gaussian Wigner model with correlation $\rho$.

Fix $n=50,s=25, \rho =0.99,K_1 = 4, K_2 = 3, N_1 = 10000, N_2 = 500$, and $\epsilon = 0.01$. 
Then, $m = \left\lfloor\frac{(1-\epsilon)s^2}{n}\right\rfloor = 12$. In Figure~\ref{fig: histogram}, we plot the histogram of our approximated estimator $\sum_{e\in \binom{S_0}{2}} \beta_e\pth{\maH_{\pi_0}^f}$ defined in Algorithm~\ref{alg:clique-based}. We see that the histograms under the independent model and the correlated model are well-separated. By picking an appropriate threshold $\tau$, the proposed algorithm succeeds in correlation detection. We note that when $K_1 = 2$ and $K_2=1$, Algorithm~\ref{alg:clique-based} is equivalent to comparing the pairwise differences of all edges, while our approach with $K_1=4$ and $K_2=3$ is more effective than this trivial method.

\begin{figure}
    \centering
    \includegraphics[width=0.5\linewidth]{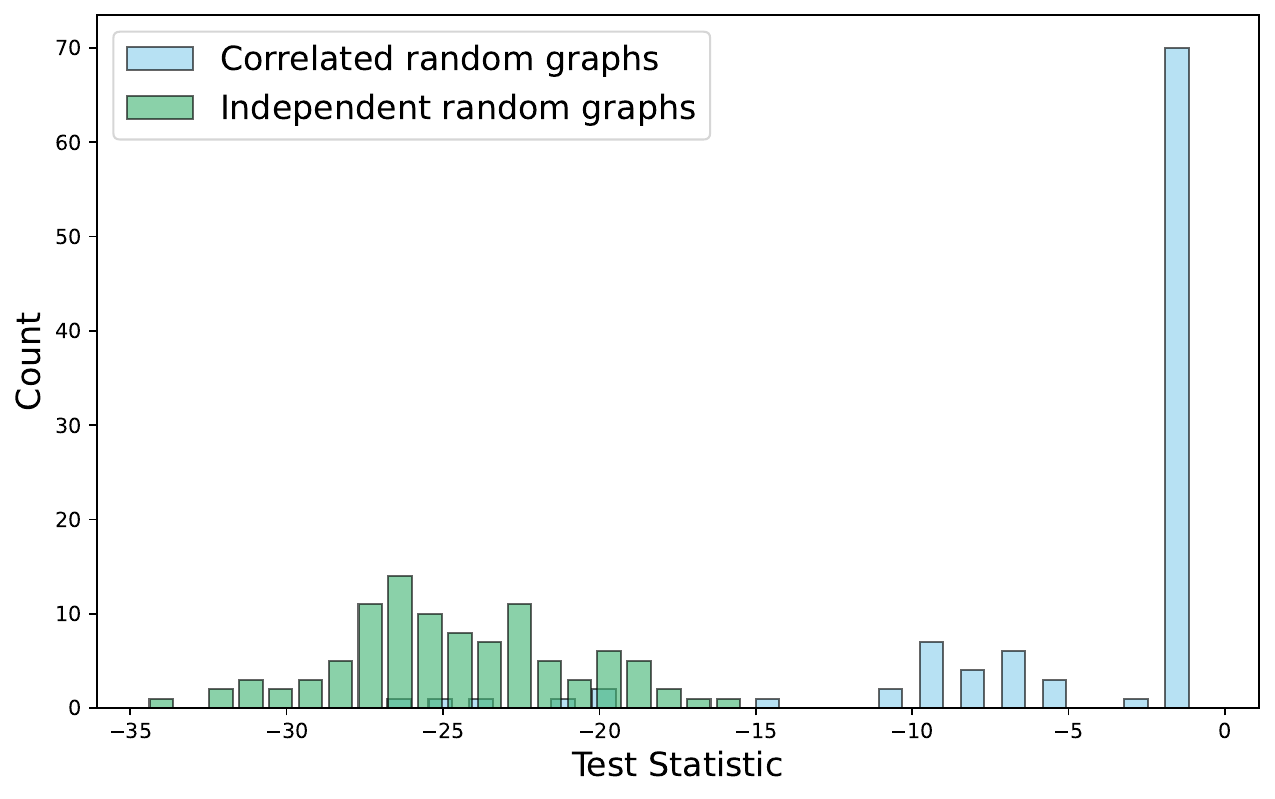}
    \caption{The histogram of the approximate test statistic $\sum_{e\in \binom{S_0}{2}} \beta_e\pth{\maH_{\pi_0}^f}$ in Algorithm~\ref{alg:clique-based} over 100 pairs of graphs, where the blue one represents the correlated Gaussian Wigner model, and the green one represents the independent graphs.}
    \label{fig: histogram}
\end{figure}

In order to compare our test statistic under different settings, we plot the Receiver Operating Characteristic (ROC) curves by varying the detection threshold and plotting the Type II error against the Type I error. We also compute the area under the curve (AUC), which can be interpreted as the probability that the  test statistic  is larger for a pair of correlated graphs than a pair of independent graphs. 

In Figure~\ref{fig:ROC-s}, for each plot, we fix $n = 50, \rho = 0.98, K_1 = 4, K_2 = 3, N_1 = 10000, N_2 = 500, \epsilon = 0.01$, and vary $s\in \sth{10, 20, 30, 40, 50}$, with $m = \left\lfloor\frac{(1-\epsilon)s^2}{n}\right\rfloor$.
We observe that as $s$ increases, the ROC curve is moving toward the upper left corner, and the AUC increases from $0.52$ to 1, indicating an improvement in the performance of our test statistic. Indeed, by Lemma~\ref{lem:hypergro-def}, the cardinality of common set increases as $s$ increase, strengthening the signal and facilitating correlation detection.

\begin{figure}[ht]
    \centering
    \includegraphics[width=0.5\linewidth]{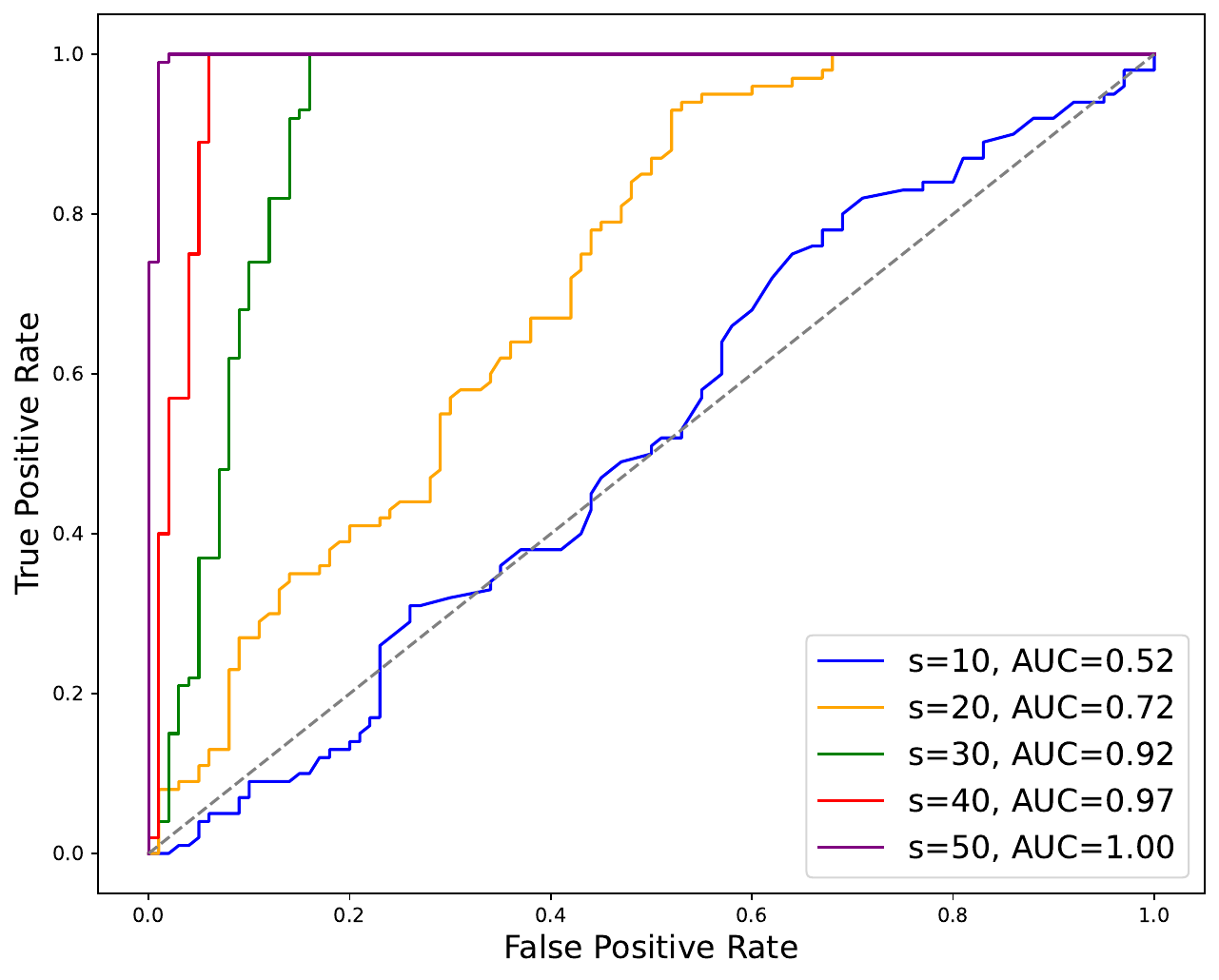}
    \caption{Comparison for the ROC curve of the approximate test statistic for different sample size $s$.}
    \label{fig:ROC-s}
\end{figure}

In Figure~\ref{fig:ROC-rho}, for each plot, we fix $n = 50, s=40, K_1 = 4, K_2 = 3, N_1 = 10000, N_2 = 500, \epsilon = 0.01$, and vary $\rho\in \sth{0.95, 0.96, 0.97,0.98,0.99}$, with $m = \left\lfloor\frac{(1-\epsilon)s^2}{n}\right\rfloor=31$.
We observe that as $\rho$ increases, the ROC curve is moving toward the upper left corner, and the AUC increases from $0.55$ to 1, indicating an improvement in the performance of our test statistic as the correlation strengthens. It turns out that correlation detection improves as $s$ and $\rho$ increase.

We also compare our method with the classical Graph Edit Distance (GED) ~\cite{sanfeliu1983distance},  a widely used graph similarity measure. When $n=50, s = 30$, and $\epsilon = 0.01$, the AUC values for the GED-based test at $\rho = 0.98, 1-10^{-6},1-10^{-7}$ are $0.53,0.73$, and $0.88$, respectively. In contrast, our algorithm yields significantly higher AUCs of $\rho$ are $0.92,1,1$ under the same settings. These results demonstrate the superior performance of our method in detecting correlation in the Gaussian Wigner model. We provide some additional experiments in Appendix~\ref{apdsec:additional-exp}.

\begin{figure}[ht]
    \centering
    \includegraphics[width=0.5\linewidth]{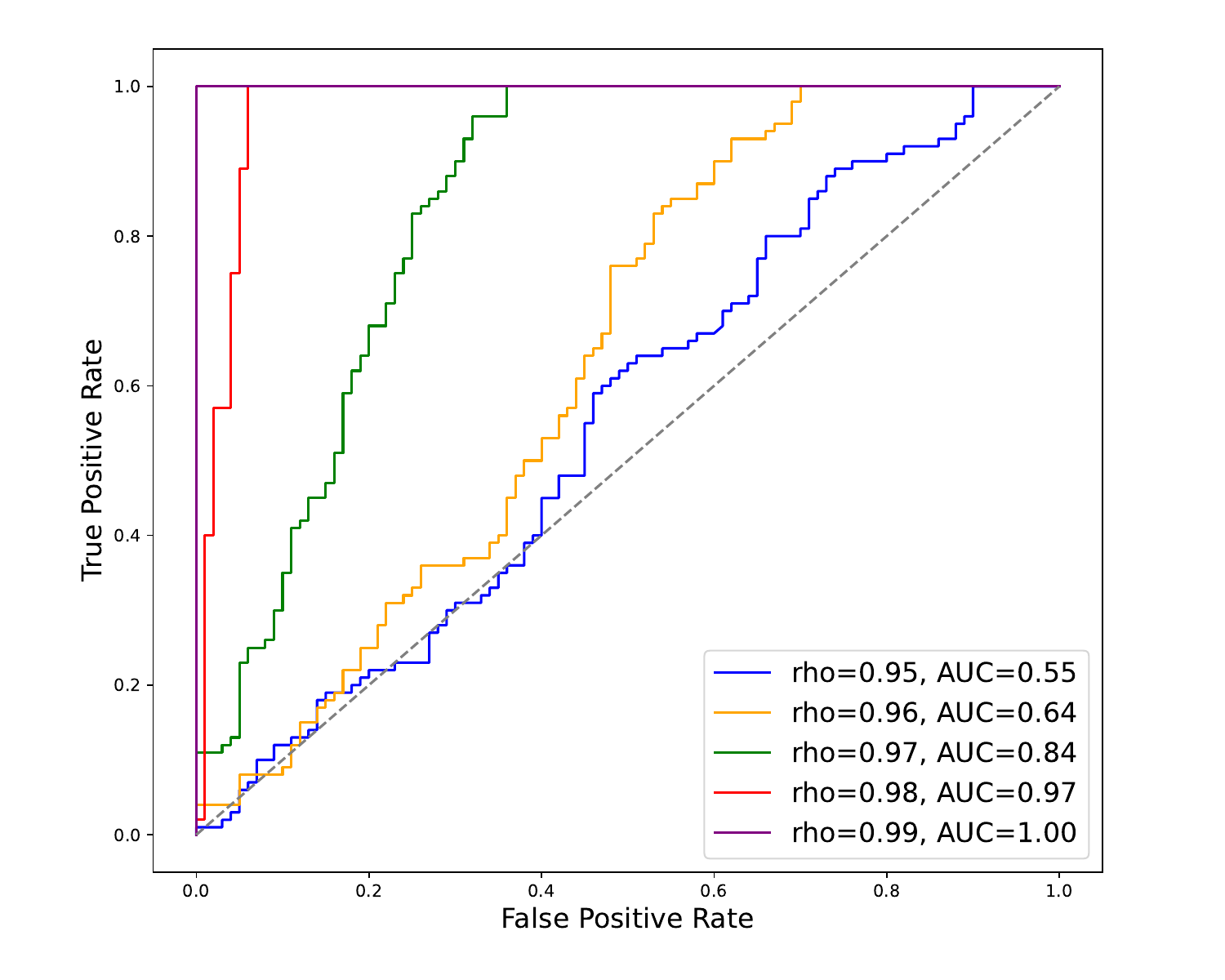}
    \caption{Comparison for the ROC curve of the approximate test statistic for different correlation coefficients $\rho$.}
    \label{fig:ROC-rho}
\end{figure}

\section{Future Directions and Discussions}\label{sec:future-direction}
This paper focuses on detecting correlation in the Gaussian Wigner model by sampling two induced subgraphs from the original graphs.
We determine the optimal rate on the sample size for correlation detection. In comparison to detection problem on the fully correlated Gaussian Wigner model, the additional challenge arises from  partial correlation when sampling subgraphs. We provide a detailed analysis of the \emph{core set} when using the conditional second moment method to derive the impossibility results. We find that the conditional second moment can be reduced to the second moment on the \emph{core set}. Additionally, we propose an efficient approximate algorithm for correlation detection based on the clique mapping technique and an iterative approach.
There are many problems to be further investigated:
\begin{itemize}
    \item \emph{Extension to \ER model.} Most results in this paper can be extended to the \ER model. The key difference lies in the additional parameter $p$ 
 controlling the edge connection probability. For the possibility results, the estimator is similar to~\eqref{eq:test-statistic}, with the bivariate function $f$ selected via MLE under the \ER model. For the impossibility results, the reduction procedure provides tight bounds when $p = n^{-\Omega(1)}$, and a more delicate event is required for the conditional second moment analysis when $p = n^{-o(1)}$, which is similar to Proposition~\ref{prop:lwbd-dense}.
 \item \emph{Theoretical analysis of the efficient algorithm.} We have shown that the Algorithm~\ref{alg:clique-based} performs well on synthetic data, while the theoretical guarantee remains an open problem. This guarantee can serve as an upper bound for the existence of a polynomial-time algorithm. Moreover, since the tree-counting-based method shows strong performance in the \ER model, it would be interesting to investigate whether it remains effective in Gaussian networks.
    \item \emph{Computational hardness.} The low-degree conjecture has recently provided evidence of the computational hardness on related problems (see, e.g., \cite{hopkins2018statistical,kunisky2019notes}). It is of interest to investigate the computational hardness conditions with respect to the sample size for the correlation detection problem using the low-degree conjecture.
    \item \emph{Other graph models.} The sample complexity for correlation detection remains unknown for many models (e.g., the stochastic block model, the graphon model). A natural next step is to explore whether our results can be extended to more general settings.
\end{itemize}
\bibliographystyle{alpha}
\bibliography{gaussian}
\appendix

\section{Proof of Theorem~\ref{thm:main-thm}}\label{apd:main-thm}

For the possibility results, by Propositions~\ref{prop: upbd-overlap} and~\ref{prop: upbd-least-square}, if \begin{align*}
    s^2 \ge \begin{cases}
        \frac{C_1 n\log n}{\rho^2}&0<\rho\le 1-e^{-6}\\
        C_2\pth{\frac{n\log n}{\log\pth{1/(1-\rho)}}\vee n}&1-e^{-6}<\rho<1
    \end{cases},
\end{align*}
then $\TV\pth{\maP,\maQ}\le 0.1$. Furthermore, if $s^2=\omega(n)$, then $\TV(\maP,\maQ) = o(1)$. Since $\frac{\log\pth{1/(1-\rho^2)}}{\rho^2}\le \frac{\rho^2/(1-\rho^2)}{\rho^2}=\frac{1}{1-\rho^2}\le \frac{1}{1-\pth{1-e^{-6}}^2}$ for any $0<\rho\le 1-e^{-6}$,  we obtain that $\TV\pth{\maP,\maQ} \le 0.1$ when $s^2 \ge \frac{C_1}{1-\pth{1-e^{-6}}^2}\cdot \frac{n\log n}{\log\pth{1/(1-\rho^2)}}$.
Since $\frac{\log\pth{1/(1-\rho^2)}}{\log\pth{1/(1-\rho)}} = 1+\frac{\log\pth{1/(1+\rho)}}{\log\pth{1/(1-\rho)}}\le 2$ for any $1-e^{-6}<\rho<1$, it follows that when $s^2 \ge 2C_2\pth{\frac{n\log n}{\log\pth{1/(1-\rho)}}\vee n}$, $\TV\pth{\maP,\maQ}\le 0.1$. Let $\overline{C} = \frac{C_1}{1-(1-e^{-6})^2}\vee 2C_2$. Then, for $s^2\ge \overline{C}\pth{\frac{n\log n}{\log\pth{1/(1-\rho^2)}}\vee n}$, we have $\TV\pth{\maP,\maQ}\le 0.1$.

For the impossibility results, by Propositions~\ref{prop:lwbd-sparse} and~\ref{prop:lwbd-dense}, if $s^2 \le \frac{n\log n}{8\log\pth{1/(1-\rho^2)}}$, then $\TV\pth{\maP,\maQ} = o(1)$. According to the concentration inequality~\eqref{eq:concentration_for_hyper_2} for the Hypergeometric distribution in Lemma~\ref{lem:Hypergeometric_distribution_prop}, there exists a constant $\underline{C}\le \frac{1}{8}$ such that, when $s^2\le \underline{C} n$, we have $\prob{|S_{\pi^*}|\ge 1}\le 0.1$, implying $\prob{|S_{\pi^*}|=0}\ge 0.9$ and thus $\TV\pth{\maP,\maQ}\le 0.1$.
Additionally, when $s^2 = o(n)$, we have $\prob{|S_{\pi^*}|=0}=1-o(1)$, which implies $\TV\pth{\maP,\maQ} = o(1)$.
Therefore, if $s^2 \le \underline{C} \pth{\frac{n\log n}{\log\pth{1/(1-\rho^2)}}\vee n}$, then $\TV\pth{\maP,\maQ}\le 0.1$. Moreover, if $s^2 \le \underline{C}\pth{\frac{n\log n}{\log\pth{1/(1-\rho^2)}}\vee n}$ or $s^2=o(n)$, we have $\TV\pth{\maP,\maQ} = o(1)$. This concludes the proof of Theorem~\ref{thm:main-thm}.

\section{Proof of Propositions}
\subsection{Proof of Proposition~\ref{prop: upbd-overlap}}\label{apd:proof-upbd-overlap}
We first upper bound $\maQ\pth{\maT\ge \tau}$ under the null hypothesis $\maH_0$ by the Chernoff bound and union bound.  
For any $X,Y\overset{\mathrm{i.i.d.}}{\sim} \maN(0,1)$ and $\lambda\in \pth{0,1}$, we have \begin{align}
    \nonumber \expect{\exp\pth{\lambda XY}} =&~\int \int \frac{1}{2\pi}\exp\pth{\lambda xy}\exp\pth{-\frac{x^2+y^2}{2}}\,dxdy\\\nonumber =&~\int\int\frac{1}{2\pi} \exp\pth{-\frac{1}{2}\pth{x-\lambda y}^2}\exp\pth{-\frac{1}{2}(1-\lambda^2)y^2}\,dxdy\\\label{eq:mgf-weak}
    =&~\int\int\frac{1}{2\pi}\exp\pth{-\frac{z^2}{2}}\exp\pth{-\frac{1}{2}(1-\lambda^2)y^2}\,dzdy = \frac{1}{\sqrt{1-\lambda^2}}. 
\end{align}
Let $\lambda = \frac{\rho}{2}$. Recall that $\maS_{s,m}$ denotes the set of injective mappings $\pi:S\subseteq V(G_1)\mapsto V(G_2)$ with $|S| = m$. For any $\pi\in\maS_{s,m}$, $\en\pth{\maH_\pi^f}\sim \sum_{i=1}^{\binom{m}{2}} A_i B_i$, where $(A_i,B_i)$ are independent and identically distributed pairs of standard normals with correlation coefficient $\rho$.
Then, by the Chernoff bound, 
\begin{align}
    \label{eq:lambda-1}\maQ\qth{\en\pth{\maH_{\pi}^f}\ge \tau}\le&~ \exp\pth{-\lambda \tau} \expect{\exp\pth{\lambda \en\pth{\maH_{\pi}^f}}}\\\nonumber=&~\exp\pth{-\lambda\tau }\expect{\prod_{i=1}^m \exp\pth{\lambda A_i B_i}} \\\nonumber 
    \overset{\mathrm{(a)}}{\le}&~\exp\pth{-\lambda \binom{m}{2}\frac{\rho}{2}-\frac{1}{2}\binom{m}{2}\log\pth{1-\lambda^2}}\\\label{eq:chernoff-upbd-overlap}
    =&~\exp\pth{-\binom{m}{2}\pth{\frac{\rho^2}{4}+\frac{1}{2}\log\pth{1-\frac{\rho^2}{4}}}}\overset{\mathrm{(b)}}{\le} \exp\pth{-\frac{1}{12}\binom{m}{2}\rho^2},
\end{align}
where $\mathrm{(a)}$ is because $\expect{\lambda A_i B_i} = \frac{1}{\sqrt{1-\lambda^2}}$ for any $1\le i\le \binom{m}{2}$; $\mathrm{(b)}$ follows from $\log(1-x)\ge -\frac{1}{3}x$ for $x=\frac{\rho^2}{4}\in \qth{0,\frac{1}{4}}$.
Applying the union bound, we obtain that \begin{align*}
    \maQ\pth{\maT \ge \tau}\le  |\maS_{s,m}| \maQ \qth{\en\pth{\maH_{\pi}^f}\ge \tau}
    &\overset{\mathrm{(a)}}{\le} \binom{s}{m}^2 m! \exp\pth{-\frac{1}{12}\binom{m}{2}\rho^2} \\&\overset{\mathrm{(b)}}{\le}\exp\pth{m\log\pth{\frac{en}{1-\epsilon}}-\frac{1}{12}\binom{m}{2}\rho^2}, 
\end{align*}
where $\mathrm{(a)}$ is because $|\maS_{s,m}| = \tbinom{s}{m}^2 m!$ and~\eqref{eq:chernoff-upbd-overlap}; $\mathrm{(b)}$ is because $\binom{s}{m}m!\le s^m$, $\binom{s}{m}\le \pth{\frac{e\cdot s}{m}}^m$ and $m=\frac{(1-\epsilon)s^2}{n}$.
Consequently, when $m-1\ge \frac{24(1+\epsilon)\log\pth{\frac{en}{1-\epsilon}}}{\rho^2}$, we have $\maQ\pth{\maT\ge \tau}\le \exp\pth{-\epsilon m\log\pth{\frac{en}{1+\epsilon}}} = o(1)$.

We then upper bound $\maP\pth{\maT<\tau}$ under the alternative hypothesis $\maH_1$.
We note that \begin{align}
        \nonumber \maP\pth{\maT<\tau}\overset{\mathrm{(a)}}{\le}&~ \maP\pth{|S_{\pi^*}|<m}+\maP\pth{\maT<\tau,|S_{\pi^*}|\ge m}\\\nonumber\overset{\mathrm{(b)}}{\le}&~ \maP\pth{|S_{\pi^*}|<m}+\maP\pth{\maT<\tau\big|\, |S_{\pi^*}|\ge m}\\\label{eq:upbd-mat-tau}\overset{\mathrm{(c)}}{\le}&~ \maP\pth{|S_{\pi^*}|<m}+\maP\pth{\en\pth{\maH_{\pi^*_m}^f}<\tau\big|\,|S_{\pi^*}|\ge m}\\\label{eq:upbd-concentration-gau}
        \overset{\mathrm{(d)}}{\le}&~\exp\pth{-\frac{\epsilon^2 s^2}{2n}}+\exp\pth{-\binom{m}{2}\frac{\rho^2}{4c_0^2}}+\exp\pth{-\binom{m}{2}\frac{\rho}{2c_0}},
    \end{align}
where $\mathrm{(a)}$ follows from~\eqref{eq:event_subseteq}; $\mathrm{(b)}$ is because $$\maP\pth{\maT<\tau,|S_{\pi^*}|\ge m}\le \frac{\maP\pth{\maT<\tau,|S_{\pi^*}|\ge m}}{\maP\pth{|S_{\pi^*}|\ge m}} = \maP\pth{\maT<\tau\big|\,|S_{\pi^*}|\ge m};$$
$\mathrm{(c)}$ is because under the event $|S_{\pi^*}|\ge m$, there exists $\pi_m^*\in \maS_{s,m}$ such that $\pi_m^* = \pi^*$ on its domain set $\mathrm{dom}(\pi_m^*)$; 
$\mathrm{(d)}$ uses the concentration~\eqref{eq:concentration_for_hyper_1} for Hypergeometric distribution and the Hanson-Wright inequality in Lemma~\ref{lem:Hanson-Wright} with $M_0 = I_{\binom{m}{2}}$ and $\delta = \exp\pth{-\binom{m}{2}\pth{\frac{\rho^2}{4c_0^2} \wedge \frac{\rho}{2c_0}}}$, where $c_0$ is the universal constant in Lemma~\ref{lem:Hanson-Wright}.
Consequently, we obtain that $\maP\pth{\maT<\tau} = o(1)$ when $m-1\ge \frac{24(1+\epsilon)\log\pth{\frac{en}{1-\epsilon}}}{\rho^2}$. Let $C_1 = 25$. Then, we have $\maP\pth{\maT<\tau}+\maQ\pth{\maT\ge \tau} = o(1)$ when $\frac{s^2}{n}-1\ge \frac{25\log n}{\rho^2}$ as $n$ becomes sufficiently large.

\subsection{Proof of Proposition~\ref{prop: upbd-least-square}}\label{apd:upbd-least-square}
We first upper bound $\maQ\pth{\maT\ge \tau}$ under the null hypothesis $\maH_0$. We note that for any $X,Y\overset{\mathrm{i.i.d.}}{\sim} \maN(0,1)$ and $\lambda>0$, we have \begin{align}
        \nonumber \expect{\exp\pth{ -\frac{\lambda}{2}(X-Y)^2}} =&~ \int\int \frac{1}{2\pi} \exp\pth{-\frac{\lambda}{2}(x-y)^2} \exp\pth{-\frac{1}{2}\pth{x^2+y^2}}\,dxdy \\\nonumber
        =&~\int\int\frac{1}{2\pi}\exp\pth{-\frac{\lambda+1}{2}\pth{x-\frac{\lambda}{\lambda+1}y}^2} \exp\pth{-\frac{2\lambda+1}{2(\lambda+1)} y^2}\,dxdy \\\label{eq:mgf-strong}
        =&~  \int\int \frac{1}{2\pi} \exp\pth{-\frac{\lambda+1}{2} z^2} \exp\pth{-\frac{2\lambda+1}{2(\lambda+1)} y^2}\,dydz =\frac{1}{\sqrt{1+2\lambda}}.
\end{align}
Let $\lambda = \frac{1}{4(1-\rho)}-\frac{1}{2}$. Then, we have $1+2\lambda = \frac{1}{2(1-\rho)}$.
Since $1-e^{-6}<\rho<1$, we also have $\lambda>0$.
Recall that $\maS_{s,m}$ denotes the set of injective mappings $\pi:S\subseteq V(G_1)\mapsto V(G_2)$ with $|S| = m$. For any $\pi\in\maS_{s,m}$, $\en\pth{\maH_\pi^f} \sim \sum_{i=1}^{\binom{m}{2}} -\frac{1}{2}\pth{A_i-B_i}^2$, where $(A_i,B_i)$ are independent and identically distributed pairs of standard normals with correlation coefficient $\rho$.
Then, by the Chernoff bound, \begin{align}
        \nonumber\maQ\pth{\en\pth{\maH_\pi^f}\ge \tau} \le&~ \exp\pth{-\lambda \tau}\expect{\exp\pth{\lambda\en\pth{\maH_\pi^f}}}\\
        \nonumber \overset{\mathrm{(a)}}{=}&~\exp\pth{\binom{m}{2}\pth{2(1-\rho)\lambda-\frac{1}{2}\log\pth{1+2\lambda}} }\\
        \nonumber =&~\exp\pth{\binom{m}{2}\pth{\frac{1}{2}-(1-\rho)-\frac{1}{2}\log\pth{\frac{1}{2(1-\rho)}}}}\\
        \label{eq:chernoff-upbd-least-square}\overset{\mathrm{(b)}}{\le}&~ \exp\pth{-\frac{\log(1/(1-\rho))}{3}\binom{m}{2}},
    \end{align}
    where $\mathrm{(a)}$ follows from~\eqref{eq:mgf-strong}; $\mathrm{(b)}$ is because $\rho>1-e^{-6}$ implies that $\frac{1}{2}-(1-\rho)-\frac{1}{2}\log\pth{\frac{1}{2(1-\rho)}}\le 1-\frac{1}{6}\log\pth{\frac{1}{1-\rho}}-\frac{1}{3}\log\pth{\frac{1}{1-\rho}}\le -\frac{1}{3}\log\pth{\frac{1}{1-\rho}}$.
Then, applying the union bound yields that \begin{align*}
        \maQ\pth{\maT\ge \tau}\le&~ |\maS_{s,m}| \maQ\qth{\en\pth{\maH_\pi^f}\ge \tau} \le\exp\pth{m\log\pth{\frac{en}{1-\epsilon}}-\frac{1}{3}\log\pth{\frac{1}{1-\rho}}\binom{m}{2}},
    \end{align*}
    where the last inequality is because $|\maS_{s,m}| = \binom{s}{m}^2 m!\le \pth{\frac{es}{m}}^m s^m = \pth{\frac{en}{1-\epsilon}}^m$.
    Therefore, when $m-1\ge \frac{6(1+\epsilon)\log\pth{\frac{en}{1+\epsilon}}}{\log\pth{1/(1-\rho)}}$, we have $\maQ\pth{\maT\ge \tau}\le \exp\pth{-\epsilon m\log\pth{\frac{en}{1-\epsilon}}}$.

We then upper bound $\maP\pth{\maT<\tau}$ under the alternative hypothesis $\maH_1$.
By~\eqref{eq:upbd-mat-tau}, we have that \begin{align*}
        \maP\pth{\maT<\tau}\le&~ \maP\pth{|S_{\pi^*}|<m}+\maP\pth{\en\pth{\maH_{\pi^*_m}^f}<\tau\big|\, |S_{\pi^*}|>m}\\
        \le&~ \exp\pth{-\frac{\epsilon^2 s^2}{2n}}+\exp\pth{-\frac{1}{2}\binom{m}{2}\pth{1-\log 2}},
    \end{align*}
    where the last inequality follows from~\eqref{eq:concentration_for_hyper_1} in Lemma~\ref{lem:Hypergeometric_distribution_prop} and $\frac{-\en\pth{\maH_{\pi^*_m}^f}}{1-\rho}\sim \chi^2\pth{\binom{m}{2}}$, and the concentration inequality for chi-square distribution~\eqref{eq:concentration_for_chisquare_uptl} in Lemma~\ref{lem:chisquare}. Since $m=\frac{(1-\epsilon)s^2}{n}$, there exists a universal constant $c_2>0$ such that, when $\frac{s^2}{n}\ge c_2$, we have $\maP\pth{\maT<\tau}\le 0.05$. Specifically, $\maP\pth{\maT<\tau} = o(1)$ when $s^2/n = \omega(1)$. Since $\maQ(\maT\ge \tau) = o(1)$ when $m-1\ge \frac{6(1+\epsilon)\log\pth{\frac{en}{1+\epsilon}}}{\log\pth{1/(1-\rho)}}$, there exists a universal constant $C_2$ such that, when $s^2 \ge C_2\pth{\frac{n\log n}{\log\pth{1/(1-\rho)}}\vee n}$, $\maP\pth{\maT<\tau}+\maQ\pth{\maT\ge \tau}\le 0.1$. Specifically, when $s^2/n = \omega(1)$, we have $\maP\pth{\maT<\tau} = o(1)$, and thus $\maP\pth{\maT<\tau}+\maQ\pth{\maT\ge \tau}=o(1)$.

\begin{remark}\label{rmk:subgaussian}
The Gaussian assumption on the weighted edges for $\beta_e(G_1)$ and $\beta_e(G_2)$ in Propositions~\ref{prop: upbd-overlap} and~\ref{prop: upbd-least-square} can be extended to the sub-Gaussian assumption.
    The main ingredients of our proof in these two Propositions are the analysis of the tail bound for the Gaussian distribution. We compute the Moment Generating Function (MGF) and use a standard Chernoff bound to bound $\maQ\pth{\maT\ge \tau}$. Indeed, if we relax the distribution assumption to a sub-Gaussian distribution, since the linear sum of sub-Gaussian random variables remains a sub-Gaussian random variable, the MGF in~\eqref{eq:mgf-strong} can be approximated by $\expect{\exp\pth{\lambda XY}}\le \frac{1}{\sqrt{1-c_1\lambda^2}}$ for some constant $c_1 \in \mathbb{R}$. Since the product of sub-Gaussian random variables is a sub-exponential random variable, the MGF in~\eqref{eq:mgf-strong} can be approximated by $\expect{\exp\pth{-\frac{\lambda}{2}\pth{X-Y}^2}}\le \frac{1}{\sqrt{1+c_2\lambda}}$ for some constant $c_2 \in \mathbb{R}$. For $\maP(\maT<\tau)$, the tail bound holds for sub-Gaussian as well. 
\end{remark}

\subsection{Proof of Proposition~\ref{prop:lwbd-sparse}}\label{apd:lwbd-aparse}
 
   For any $S\subseteq V(G_1)$ and $T\subseteq V(G_2)$ with $|S| = |T|$, define \begin{align}
       \label{eq:def_PST}\maP\pth{G_1,G_2,S,T} &=  \ti{\maP}\pth{G_1[S],G_2[T]}\prod_{e\notin \binom{S}{2}}\maQ_0\pth{\beta_e(G_1)}\prod_{e\notin \binom{T}{2}}\maQ_0\pth{\beta_e(G_2)},\\\label{eq:def_QST}
       \maQ\pth{G_1,G_2,S,T}&=\maQ\pth{G_1[S],G_2[T]}\prod_{e\notin \binom{S}{2}}\maQ_0\pth{\beta_e(G_1)}\prod_{e\notin \binom{T}{2}}\maQ_0\pth{\beta_e(G_2)},
   \end{align}
where $G[S]$ for any $S\subseteq V(G)$ denotes the induced subgraph with vertex set $S$ of $G$; $\ti{\maP}$ denotes the distribution of two random graphs follow fully correlated Gaussian Wigner model; $\maQ_0$ denotes the standard normal distribution. 
Recall that $S_{\pi^*}=V(G_1)\cap \pth{\pi^*}^{-1}\pth{V(G_2)}$ and $T_{\pi^*} = \pi^*\pth{V(G_1)}\cap V(G_2)$.
Indeed, $\maP\pth{G_1,G_2,S,T}$ denotes the distribution under $\maP$ when given $S_{\pi^*}=S$ and $T_{\pi^*}=T$. Besides, $\maQ\pth{G_1,G_2|S,T}$ and $\maQ\pth{G_1,G_2}$ are the same distribution for any $S\subseteq V(G_1),T\subseteq V(G_2)$ with $|S| = |T|$.

Since $\maP\pth{\cdot|\maE} = \frac{\maP\pth{\cdot,\maE}}{\maP\pth{\maE}} = \frac{\sum_{i=0}^{(1+\epsilon)s^2/n} \maP\pth{|S_{\pi^*}|=i}\maP\pth{\cdot \mid\,|S_{\pi^*}|=i}}{\maP\pth{\maE}}$ and $\TV\pth{\sum_{i}\lambda_i \maP_i,\maQ}\le \sum_{i}\lambda_i \TV\pth{\maP_i,\maQ}$ when $\sum_i \lambda_i=1$, we obtain
\begin{align}\label{eq:lwbd-sparse-1}
&~\TV\pth{\maP'(G_1,G_2),\maQ\pth{G_1,G_2}}\le \sum_{i=0}^{\frac{(1+\epsilon)s^2}{n}} \frac{\maP\pth{|S_{\pi^*}|=i}}{\maP\pth{\maE}}\cdot\TV\pth{\maP\pth{G_1,G_2\big|\,|S_{\pi^*}|=i},\maQ\pth{G_1,G_2}}.
\end{align}
For any $0\le i\le \frac{(1+\epsilon)s^2}{n}$ and $S\subseteq V(G_1),T\subseteq V(G_2)$ with $|S| = |T|=i$, by the data processing inequality (see, e.g., \cite[Section 3.5]{polyanskiy2025information}), we have
\begin{align}
\nonumber \TV\pth{\maP\pth{G_1,G_2\big|\,|S_{\pi^*}|=i},\maQ\pth{G_1,G_2}}\le&~ \TV\pth{\maP\pth{G_1,G_2,S,T},\maQ\pth{G_1,G_2,S,T}}\\\label{eq:lwbd-sparse-2}=&~\TV\pth{\ti{\maP}\pth{G_1[S],G_2[T]},\maQ\pth{G_1[S],G_2[T]}},
\end{align}
where the last equality follows from~\eqref{eq:def_PST}, ~\eqref{eq:def_QST} and the fact that $\TV\pth{X\otimes Z,Y\otimes Z} = \TV\pth{X,Y}$ for any distributions $X,Y,Z$ such that $Z$ is independent with $X$ and $Y$.


For the random graphs $G_1[S]$ and $G_2[T]$ with $S\subseteq V(G_1),T\subseteq V(G_2)$, and $|S| = |T|$, they follow the correlated Gaussian Wigner model with node set size $|S|$ under $\ti{\maP}$, while they are independent under $\maQ$. It follows from \cite[Theorem 1]{wu2023testing} that, when $\frac{|S|}{\log |S|}\le \frac{2}{\rho^2}$, the total variation distance $\TV\pth{\ti{\maP}\pth{G_1[S],G_2[T]},\maQ\pth{G_1[S],G_2[T]}}=o(1)$. We then verify the condition $\frac{|S|}{\log |S|}\le \frac{2}{\rho^2}$ for any $0\le |S|\le \frac{(1+\epsilon)s^2}{n}$.
In fact, since $s^2 \le \frac{n\log n}{2\log\pth{1/(1-\rho^2)}}$, we have \begin{align*}
    |S|\le \frac{(1+\epsilon) s^2}{n}\le \frac{(1+\epsilon)\log n}{2\log\pth{1/(1-\rho^2)}}\le \frac{2\log\pth{1/\rho^2}}{\rho^2},
\end{align*}
where the last inequality follows from $\log\pth{1/(1-\rho^2)}\ge \rho^2$, $\frac{\log n}{2}<\log\pth{1/\rho^2}$ and $\epsilon<1$. Therefore, we obtain $\frac{|S|}{\log |S|}\le \frac{\frac{2}{\rho^2}\log\pth{1/\rho^2}}{\log\pth{1/\rho^2}+\log\pth{2\log(1/\rho^2)} }\le \frac{2}{\rho^2}$, and thus \begin{align*}
    \TV\pth{\ti{\maP}\pth{G_1[S],G_2[T]},\maQ\pth{G_1[S],G_2[T]}} = o(1)
\end{align*}
for any $S\subseteq V(G_1),T\subseteq V(G_2)$ with $|S|=|T| \le  \frac{(1+\epsilon)s^2}{n}$. Combining this with~\eqref{eq:lwbd-sparse-1} and~\eqref{eq:lwbd-sparse-2}, we conclude that \begin{align*}
    \TV\pth{\maP'\pth{G_1,G_2},\maQ\pth{G_1,G_2}}&\le~\sum_{i=0}^{\frac{(1+\epsilon)s^2}{n}} \frac{\maP\pth{|S_{\pi^*}|=i}}{\maP\pth{\maE}}\cdot\TV\pth{\maP\pth{G_1,G_2\big|\,|S_{\pi^*}|=i},\maQ\pth{G_1,G_2}}\\
    &\le~\sum_{i=0}^{\frac{(1+\epsilon)s^2}{n}}\frac{\maP\pth{|S_{\pi^*}|=i}}{\maP\pth{\maE}}\cdot o(1) = o(1).
\end{align*}
Therefore, \begin{align}
    \nonumber \TV\pth{\maP(G_1,G_2),\maQ(G_1,G_2)} \overset{\mathrm{(a)}}{\le}&~ \TV\pth{\maP{(G_1,G_2)}, \maP'(G_1,G_2)}+\TV\pth{\maP'(G_1,G_2),\maQ(G_1,G_2)}\\\nonumber  \overset{\mathrm{(b)}}{\le}&~\TV\pth{\maP{(G_1,G_2,\pi)}, \maP'(G_1,G_2,\pi)}+\TV\pth{\maP'(G_1,G_2),\maQ(G_1,G_2)}\\\label{eq:upbd-tv-conditional}
    =&~ \maP\pth{(G_1,G_2,\pi)\notin \maE}+\TV\pth{\maP'(G_1,G_2),\maQ(G_1,G_2)}=o(1),
\end{align}
where $\mathrm{(a)}$ follows from the triangle inequality and $\mathrm{(b)}$ is derived by the data processing inequality (see, e.g., \cite[Section 3.5]{polyanskiy2025information}).

\subsection{Proof of Proposition~\ref{prop:lwbd-dense}}\label{apd:lwbd-dense}
Recall that the conditional distribution is defined as 
\begin{align*}
    \maP'(G_1,G_2,\pi) = \frac{\maP(G_1,G_2,\pi) \ones_{(G_1,G_2,\pi)\in \maE}}{\maP(\maE)} = (1+o(1)) \maP\pth{G_1,G_2,\pi} \ones_{(G_1,G_2,\pi)\in \maE},
\end{align*}
where the last inequality holds because $\maP\pth{\maE} = 1-o(1)$. 
By~\eqref{eq:TV-tsy-upbd} and~\eqref{eq:upbd-tv-conditional}, we have the following sufficient condition for the impossibility results:\begin{align}
    \label{eq:condi-second-intro}\mathbb{E}_\maQ \qth{\pth{\frac{\maP'(G_1,G_2)}{\maQ(G_1,G_2)}}^2} = 1+o(1)\Rightarrow \TV\pth{\maP',\maQ} = o(1)\Rightarrow\TV(\maP,\maQ) = o(1).
\end{align}

Recall the likelihood ratio in~\eqref{eq:def_of_likeli_ratio}. To compute the conditional second moment, we introduce an independent copy $\ti{\pi}$ of the latent permutation $\pi$ and express the square likelihood ratio as \begin{align*}
    \pth{\frac{\maP'(G_1,G_2)}{\maQ(G_1,G_2)}}^2 =&~(1+o(1))\mathbb{E}_\pi\qth{\frac{\maP(G_1,G_2|\pi)}{\maQ(G_1,G_2)} \ones_{(G_1,G_2,\pi)\in \maE}} \mathbb{E}_{\ti{\pi}} \qth{\frac{\maP(G_1,G_2|\ti{\pi})}{\maQ(G_1,G_2)}\ones_{(G_1,G_2,\ti{\pi})\in\maE}}
    \\=&~ (1+o(1))\mathbb{E}_{\pi \bot \ti{\pi}} \qth{\frac{\maP(G_1,G_2|\pi)}{\maQ(G_1,G_2)} \frac{\maP(G_1,G_2|\ti{\pi})}{\maQ(G_1,G_2)} \ones_{(G_1,G_2,\pi)\in \maE}\ones_{(G_1,G_2,\ti{\pi})\in \maE}}.
\end{align*}


Taking expectation for both sides under $\maQ$, the conditional second moment is given by  \begin{align}
    \mathbb{E}_\maQ \qth{\pth{\frac{\maP'(G_1,G_2)}{\maQ(G_1,G_2)}}^2} &= (1+o(1))
    \mathbb{E}_\maQ \qth{\mathbb{E}_{\pi \bot \ti{\pi}}
     \qth{\frac{\maP(G_1,G_2|\pi)}{\maQ(G_1,G_2)} \frac{\maP(G_1,G_2|\ti{\pi})}{\maQ(G_1,G_2)}\ones_{(G_1,G_2,\pi)\in \maE}\ones_{(G_1,G_2,\ti{\pi})\in \maE}}}\nonumber\\
    &=(1+o(1)) \mathbb{E}_{\pi \bot \ti{\pi}} \qth{\mathbb{E}_\maQ \qth{\frac{\maP(G_1,G_2|\pi)}{\maQ(G_1,G_2)} \frac{\maP(G_1,G_2|\ti{\pi})}{\maQ(G_1,G_2)}\ones_{(G_1,G_2,\pi)\in \maE}\ones_{(G_1,G_2,\ti{\pi})\in \maE}}}\nonumber\\ &=   (1+o(1)) \mathbb{E}_{\pi \bot \ti{\pi}} \qth{\ones_{(G_1,G_2,\pi)\in \maE}\ones_{(G_1,G_2,\ti{\pi})\in \maE}\mathbb{E}_\maQ \qth{\frac{\maP(G_1,G_2|\pi)}{\maQ(G_1,G_2)} \frac{\maP(G_1,G_2|\ti{\pi})}{\maQ(G_1,G_2)}}},\label{eq:dense_graph_condi_second_moment}
\end{align}
where the last equality holds since $\maE$ is independent with the edges in $G_1$ and $G_2$. 
Recall that $I^*= I^*(\pi,\ti{\pi})$ defined in~\eqref{eq:def_I^*}.
Since $I^*  = \cup_{C\in \sfC}\cup_{e\in C}\cup_{v\in V(e)\cap V(G_1)}v$ by the definition of $I^*$, we obtain that $\tbinom{I^*}{2} = \sum_{C\in \sfC} |C|$ by counting the edges induced by the vertices in $I^*$. Combining this with \eqref{eq:second_moment_kappa} and \eqref{eq:dense_graph_condi_second_moment}, we have that  \begin{align*}
        \mathbb{E}_\maQ \qth{\pth{\frac{\maP'(G_1,G_2)}{\maQ(G_1,G_2)}}^2} &=   (1+o(1)) \mathbb{E}_{\pi \bot \ti{\pi}} \qth{\ones_{(G_1,G_2,\pi)\in \maE}\ones_{(G_1,G_2,\ti{\pi})\in \maE}\mathbb{E}_\maQ \qth{\frac{\maP(G_1,G_2|\pi)}{\maQ(G_1,G_2)} \frac{\maP(G_1,G_2|\ti{\pi})}{\maQ(G_1,G_2)}}}\\
        &=  (1+o(1)) \mathbb{E}_{\pi \bot \ti{\pi}} \qth{\ones_{(G_1,G_2,\pi)\in \maE}\ones_{(G_1,G_2,\ti{\pi})\in \maE} \prod_{C\in \sfC}\pth{\frac{1}{1-\rho^{2|C|}}}}\\
        &\le (1+o(1)) \mathbb{E}_{\pi \bot \ti{\pi}} \qth{\ones_{(G_1,G_2,\pi)\in \maE}\ones_{(G_1,G_2,\ti{\pi})\in \maE} \prod_{C\in \sfC}\pth{\frac{1}{1-\rho^2}}^{|C|}}\\
        &=(1+o(1)) \mathbb{E}_{\pi \bot \ti{\pi}} \qth{\ones_{(G_1,G_2,\pi)\in \maE}\ones_{(G_1,G_2,\ti{\pi})\in \maE}  \pth{\frac{1}{1-\rho^2}}^{|I^*|(|I^*|-1)/2}},
    \end{align*}
where the inequality follows from $\frac{1}{1-\rho^{2x}}- \pth{\frac{1}{1-\rho^2}}^x=\frac{\pth{1-\rho^2}^x+\pth{\rho^2}^x-1}{\pth{1-\rho^{2x}}\pth{1-\rho^2}^x}\le \frac{1-\rho^2+\rho^2-1}{\pth{1-\rho^{2x}}\pth{1-\rho^2}^x}=0$ for any $0<\rho<1$ and $x\ge 1$.
Since $\prob{|I^*| = t}\le \pth{\frac{s}{n}}^{2t}$ by Lemma~\ref{lem:property_of_I} and $|I^*|\le |V(G_1)\cap \pi^{-1}(V(G_2))|\le \frac{(1+\epsilon)s^2}{n}$ if $(G_1,G_2,\pi),(G_1,G_2,\ti{\pi})\in \maE$, we obtain  
\begin{align}
    \nonumber \mathbb{E}_\maQ \pth{\frac{\maP'(G_1,G_2)}{\maQ(G_1,G_2)}}^2 &\le (1+o(1)) \mathbb{E}_{\pi \bot \ti{\pi}} \qth{\ones_{(G_1,G_2,\pi)\in \maE}\ones_{(G_1,G_2,\ti{\pi})\in \maE}  \pth{\frac{1}{1-\rho^2}}^{|I^*|(|I^*|-1)/2}}\\\nonumber
    &=(1+o(1)) \sum_{t=0}^{\frac{(1+\epsilon)s^2}{n}} \prob{|I^*| = t} \pth{\frac{1}{1-\rho^2}}^{t(t-1)/2}\\\label{eq:upbd-conditional-second}&\le(1+o(1)) \sum_{t=0}^{\frac{(1+\epsilon)s^2}{n}} \pth{\frac{s}{n}}^{2t} \pth{\frac{1}{1-\rho^2}}^{t(t-1)/2}.
\end{align}
Let $a_t\triangleq \pth{\frac{s}{n}}^{2t} \pth{\frac{1}{1-\rho^2}}^{t(t-1)/2}$. For any $t<\frac{(1+\epsilon)s^2}{n}$, we have 
\begin{align}
     \frac{a_{t+1}}{a_t}= \frac{s^2}{n^2} \pth{\frac{1}{1-\rho^2}}^t\le \frac{s^2}{n^2} \pth{\frac{1}{1-\rho^2}}^{\frac{(1+\epsilon)s^2}{n}}
    =\exp\pth{\log \pth{\frac{s^2}{n^2}}+\frac{(1+\epsilon)s^2}{n} \log\pth{\frac{1}{1-\rho^2}}}\label{eq:ratio_of_F}.
\end{align}
Since $s^2\le \frac{n\log n}{8\log\pth{1/(1-\rho^2)}}$, we obtain
\begin{align*}
    \frac{(1+\epsilon)s^2}{n} \log\pth{\frac{1}{1-\rho^2}}\le \frac{(1+\epsilon)\log n}{8}
\end{align*}
and 
\begin{align*}
    \log\pth{\frac{s^2}{n^2}} \le \log\pth{\frac{\log n}{8n\log\pth{1/(1-\rho^2)}}}\overset{\mathrm{(a)}}{\le} -\frac{1}{2}\log n+\log\pth{\frac{\log n}{8}},
\end{align*}
where $\mathrm{(a)}$ is because $\log\pth{\frac{1}{1-\rho^2}}\ge \log\pth{\frac{1}{1-n^{-1/2}}}\ge n^{-1/2}$.
Combining this with~\eqref{eq:ratio_of_F}, we obtain that $\frac{a_{t+1}}{a_t} \le \exp\pth{-\frac{(3-\epsilon)\log n}{8}+\log\pth{\frac{\log n}{8}}}\le n^{-1/4}$. Therefore, by~\eqref{eq:upbd-conditional-second}, \begin{align*}
    \mathbb{E}_\maQ \pth{\frac{\maP'(G_1,G_2)}{\maQ(G_1,G_2)}}^2 \le&~ (1+o(1)) \sum_{t=0}^{\frac{(1+\epsilon)s^2}{n}} a_t\\=&~(1+o(1))\sum_{t=0}^{\frac{(1+\epsilon)s^2}{n}} \pth{\frac{s}{n}}^{2t} \pth{\frac{1}{1-\rho^2}}^{t(t-1)/2} \le \frac{1+o(1)}{1-n^{-1/4}} = 1+o(1),
\end{align*}
which implies that $\TV(\maP,\maQ) = o(1)$ by \eqref{eq:condi-second-intro}.

\section{Proof of Lemmas}
\subsection{Proof of Lemma~\ref{lem:hypergro-def}}

Recall that $|V(G_1)| = |V(G_2)| = s$ and $V(G_1)\subseteq V(\mathbf{G}_1),V(G_2)\subseteq V(\mathbf{G}_2)$. For any $\pi \sim \maS_n$, we note that the event $|\pi(V(G_1))\cap V(G_2)|=t$ with $t\in [s]$ can be divided as:\begin{itemize}
    \item Picking $t$ vertices in $V(G_1),V(G_2)$ respectively and constructing the mapping between picked vertices. We have  $ {\tbinom{s}{t}^2 t!}$ options for this step.
    \item Mapping the remaining $s-t$ vertices in $V(G_1)$ to $V(\mathbf{G}_2)\backslash V(G_2)$. We have ${\tbinom{n-s}{s-t}(s-t)!}$ options for this step.
    \item Mapping $V(\mathbf{G}_1)\backslash V(G_1)$ to the remaining vertices in $V(\mathbf{G}_2)$. We have $(n-s)!$ options for this step.
\end{itemize}
Then, for any $t\le s$,  we have that \begin{align}\label{eq:hyper-distribution}
    \prob{|\pi(V(G_1))\cap V(G_2)| = t} = \frac{\tbinom{s}{t}^2t!\cdot \tbinom{n-s}{s-t}(s-t)!\cdot (n-s)!}{n!} = \frac{\tbinom{s}{t}\tbinom{n-s}{s-t}}{\tbinom{n}{s}},
\end{align}
which indicates that the size of  intersection set $|\pi(V(G_1))\cap V(G_2)|$ follows hypergeometric distribution $\HG(n,s,s)$ where $\pi\overset{\mathrm{Unif.}}{\sim} \maS_n$.

\subsection{Proof of Lemma~\ref{lem:partial_orbit_second_moment}}\label{apdsec:proof-of-partial-orbit}

For any $P = (e_1,\pi(e_1),e_2,\cdots,e_j,\pi(e_j))\in \sfP$ with $\ti{\pi}(e_2) = \pi(e_1),\cdots,\ti{\pi}(e_j) = \pi(e_{j-1})$, we have that \begin{align}
    \nonumber L_P &= \prod_{i=1}^j\ell(e_i,\pi(e_i)) \prod_{i=2}^{j}\ell(e_i,\ti{\pi}(e_i)) = \prod_{i=1}^j\ell(e_i,\pi(e_i)) \prod_{i=2}^{j}\ell(e_i,{\pi}(e_{i-1}))\\\label{eq:LP}&=\ell(e_1,\pi(e_1))\ell(\pi(e_1),e_2)\cdots\ell(\pi(e_{j-1}),e_j)\ell(e_j,\pi(e_j)). 
\end{align}
Under the distribution $\maQ$, it follows from~\eqref{eq:LP} that $L_P=\ell(B_0,B_1)\ell(B_1,B_2)\cdots\ell(B_{k-1},B_k)$ for some $k\in \mathbb{N}$ and $B_0, B_1,\cdots,B_k\overset{\mathrm{i.i.d.}}{\sim}\maN(0,1)$. Recall that \begin{align}\label{eq:likelihood-func}
    \ell(a,b) = \frac{\maP\pth{\beta_e(G_1)=a,\beta_{\pi(e)}(G_2)=b}}{\maQ\pth{\beta_e(G_1)=a,\beta_{\pi(e)}(G_2) = b}} = \frac{1}{\sqrt{1-\rho^2}} \exp\pth{\frac{-\rho^2(a^2+b^2)+2\rho ab}{2(1-\rho^2)}},\text{ for any }a,b\in\mathbb{R}.
\end{align}
Then,
     \begin{align*}
        &~\mathbb{E}_\maQ\qth{L_P} = \mathbb{E}_\maQ \qth{\ell(B_0,B_1)\ell(B_1,B_2)\cdots \ell(B_{k-1},B_k)}\\
        =&~\frac{1}{\pth{2\pi}^{(k+1)/2}\pth{(1-\rho^2)}^{k/2}}\int\cdots\int \exp\pth{\sum_{t=0}^{k-1}\frac{-\rho^2(b_t^2+b_{t+1}^2)+2\rho b_t b_{t+1}}{2(1-\rho^2)}}\exp\pth{\sum_{t=0}^{k}-\frac{b_t^2}{2}}\, db_0\cdots db_k \\
        =&~\frac{1}{\pth{2\pi}^{(k+1)/2}\pth{(1-\rho^2)}^{k/2}}\int\cdots\int \exp\pth{-\frac{\sum_{t=0}^{k-1} \pth{b_t-\rho b_{t+1}}^2}{2(1-\rho^2)} - \frac{b_{k}^2}{2}}\,db_0\cdots db_{k}=1,
    \end{align*}
where the last equality holds since
the transformation $B_t'\triangleq \frac{B_t-\rho B_{t+1}}{\sqrt{1-\rho^2}}$ for any $0\le t\le k-1$ yields that
$\mathbb{E}_\maQ\qth{L_P} = \frac{1}{(2\pi)^{(k+1)/2}} \int\cdots \int \exp\pth{-\frac{\sum_{t=0}^{k-1} b_k'^2}{2}-\frac{b_{k}^2}{2}}\,db_0'\cdots db_{k-1}' db_k = 1$.

For any $C = (e_1,\pi(e_1),e_2,\cdots,e_j,\pi(e_j))\in \sfC$ with $\ti{\pi}(e_2) = \pi(e_1),\cdots,\ti{\pi}(e_j) = \pi(e_{j-1})$ and $\ti{\pi}(e_1) = \pi(e_j)$, we denote $e_0 =e_j$ for notational simplicity. Then, we have that \begin{align*}
    L_C &= \prod_{i=1}^j\ell(e_i,\pi(e_i)) \prod_{i=1}^{j}\ell(e_i,\ti{\pi}(e_i)) = \prod_{i=1}^j\ell(e_i,\pi(e_i)) \prod_{i=1}^{j}\ell(e_i,{\pi}(e_{i-1}))\\&=\ell(e_1,\pi(e_1))\ell(\pi(e_1),e_2)\cdots\ell(\pi(e_{j-1}),e_j)\ell(e_j,\pi(e_j))\ell(\pi(e_j),e_1). 
\end{align*}
Then $L_C=\ell(B_1,B_2)\cdots\ell(B_{k-1},B_k)\ell(B_k,B_1)$ for  $k=2j$ and $B_1,\cdots,B_k\overset{\mathrm{i.i.d.}}{\sim} \maN(0,1)$.   
Denote $B_{k+1} = B_1$, we have that 
     \begin{align*}
        &\mathbb{E}_\maQ[L_C] = \mathbb{E}_\maQ\qth{\ell(B_1,B_2)\cdots \ell(B_{k-1},B_k)\ell(B_k,B_1)}\\=&\frac{1}{\pth{2\pi(1-\rho^2)}^{k/2}}\int\cdots \int \exp\pth{\frac{\sum_{t=0}^{k-1}-\rho^2\pth{b_t^2+b_{t+1}^2}+2\rho b_t b_{t+1}}{2(1-\rho^2)}}\exp\pth{\sum_{t=1}^k-\frac{b_t^2}{2}}\,db_1\cdots db_k\\=&\frac{1}{\pth{2\pi(1-\rho^2)}^{k/2}}\int\cdots \int \exp\pth{\frac{\sum_{t=0}^{k-1}-\pth{b_t-\rho b_{t+1}}^2}{2(1-\rho^2)}}\, db_1\cdots db_k.
    \end{align*}
Let $C_t\triangleq B_t-\rho B_{t+1}$ for any $1\le t \le k$. Then \begin{align*}
    \qth{C_1,C_2,\cdots,C_{k-1},C_k}^\top = \mathbf{J}_k \qth{B_1,B_2,\cdots,B_{k-1},B_k}^\top,
\end{align*}
where
\begin{align*}
    \mathbf{J}_k \triangleq \begin{bmatrix}
        1 & -\rho & 0&\cdots &0 \\0 &1&-\rho&\cdots  &0\\ 0&0&1&\cdots &0\\\vdots&\vdots& \vdots&\ddots &\vdots\\-\rho&0 &\cdots&0& 1
    \end{bmatrix}
\end{align*} and thus $\det\pth{\mathbf{J}_k}  =1-\rho^{k}$ (see, e.g., \cite[Section 3.2]{davis1979circulant}). Then, we obtain that \begin{align*}
    \mathbb{E}_\maQ[L_C] =\frac{1}{\pth{2\pi(1-\rho^2)}^{k/2} \det\pth{\mathbf{J}_k}} \int \cdots \int \exp\pth{\frac{\sum_{t=1}^k -c_t^2}{2(1-\rho^2)}}\, dc_1\cdots dc_k = \frac{1}{1-\rho^k} = \frac{1}{1-\rho^{2|C|}}.
\end{align*}

\subsection{Proof of Lemma~\ref{lem:property_of_I}}
    Let $I'\triangleq \argmax_{I\subseteq V(G_1),\pi(I) = \ti{\pi}(I)}|I|$, we first show that $I' = I^*$. On the one hand, since $\pi(I') = \ti{\pi}(I')$, we have $\pi\pth{\tbinom{I'}{2}} = \ti{\pi}\pth{\tbinom{I'}{2}}$.
    Recall that the connected components of the correlated functional digraph in Definition~\ref{def:correlated-functional} consist of paths and cycles. For any path $P\in \sfP$,
    we note that $\pi\pth{P\cap V(G_1)}\neq \ti{\pi}\pth{P\cap V(G_1)}$, and thus $\pi\pth{\binom{P\cap V(G_1)}{2}} \neq \ti{\pi}\pth{\binom{P\cap V(G_1)}{2}}$.
    For any cycle $C\in \sfC$,
    we note that $\pi\pth{C\cap V(G_1)}= \ti{\pi}\pth{C\cap V(G_1)}$, and thus $\pi\pth{\binom{C\cap V(G_1)}{2}} = \ti{\pi}\pth{\binom{C\cap V(G_1)}{2}}$.
    Therefore, $\tbinom{I'}{2}\subseteq \cup_{C\in \sfC}\cup_{e\in C\cap E(G_1)}e$. By the definition of $I^*$, we obtain $I'\subseteq I^*$. On the other hand, for any $C\in \sfC$, since $\pi\pth{\cup_{e\in C\cap E(G_1)} e} = \ti{\pi}\pth{\cup_{e\in C\cap E(G_1)} e}$ by the definition of a cycle and $C\cap C'=\emptyset$ for any $C\neq C'\in \sfC$, we have that \begin{align*}
        \pi\pth{\cup_{C\in\sfC}\cup_{e\in C\cap E(G_1)} e} = \ti{\pi}\pth{\cup_{C\in \sfC} \cup_{e\in C\cap E(G_1)}e}.
    \end{align*}
    Therefore, we have $\pi\pth{\cup_{C\in\sfC}\cup_{e\in C} \cup_{v\in v(e)\cap V(G_1)}v} = \ti{\pi}\pth{\cup_{C\in \sfC} \cup_{e\in C}\cup_{v\in v(e)\cap V(G_1)}v}$, which implies $\pi(I^*) = \ti{\pi}(I^*)$. Since $I^*\subseteq V(G_1)$, by the definition of $I'$,  we conclude that $I^*\subseteq I'$.  Therefore, we have $I^* = I' = \argmax_{I\subseteq V(G_1),\pi(I) = \ti{\pi}(I)} |I|$.

    For any $t\le s$, by the union bound, we obtain  
    \begin{align}
        \nonumber \prob{|I^*|=t}&\le~ \prob{\exists A\subseteq V(G_1),|A| = t,\pi(A) = \ti{\pi}(A)\subseteq V(G_2)}
\\\label{eq:upbd-I^*-size}&\le~ \tbinom{s}{t}\prob{A\subseteq V(G_1),|A| = t,\pi(A) = \ti{\pi}(A)\subseteq V(G_2)}.
    \end{align}
For any fixed set $A \subseteq V(G_1)$ with $|A| = t$ and $\pi(A) = \tilde{\pi}(A) \subseteq V(G_2)$, we first choose a set $B \subseteq V(G_2)$ with $|B| = t$, and set $\pi(A) = \tilde{\pi}(A) = B$. There are $\binom{s}{t}$ ways to choose $B$, and $t!^2$ ways to map $\pi(A) = \tilde{\pi}(A) = B$. For the remaining vertices in $V(G_1)$, there are $(n-t)!^2$ ways to map them under $\pi$ and $\tilde{\pi}$. Therefore,
\begin{align}
 \tbinom{s}{t}\prob{A\subseteq V(G_1),|A| = t,\pi(A) = \ti{\pi}(A)\subseteq V(G_2)}
=\tbinom{s}{t}\cdot \frac{1}{(n!)^2}\tbinom{s}{t}t!^2(n-t)!^2 \label{eq:upper_bound_for_size_of_I*}
\le \pth{\frac{s}{n}}^{2t},
\end{align}
where the last inequality is due to the fact that $\tbinom{s}{t}\cdot \frac{1}{(n!)^2}\tbinom{s}{t}t!^2(n-t)!^2  = \qth{\frac{s(s-1)\cdots(s-t+1)}{n(n-1)\cdots(n-t+1)}}^2$
and  for any $i = 1,\cdots,t-1$, $\frac{s-i}{n-i}\le \frac{s}{n}$. 
Combining this with~\eqref{eq:upbd-I^*-size}, we obtain $\prob{|I^*|=t}\le \pth{\frac{s}{n}}^{2t}$.

\section{Auxiliary Results}

\subsection{Concentration Inequalities for Gaussian}

\begin{lemma}[Hanson-Wright inequality]\label{lem:Hanson-Wright}
    Let $X,Y\in \mathbb{R}^n$ be standard Gaussian vectors such that the pairs $(X_i,Y_i)\sim \gaussianrho$ are independent for $i=1,\cdots,n$. Let $M_0\in \mathbb{R}^{n\times n}$ be any deterministic matrix. There exists some universal constant $c_0>0$ such that \begin{align*}
        \prob{\left| X^\top M_0 Y -\rho \mathrm{Tr}(M_0)\right|\ge c_0\pth{\Vert M_0\Vert_F \sqrt{\log(1/\delta)}\vee\Vert M_0\Vert_2\log(1/\delta)}}\le \delta.
    \end{align*} 
\end{lemma}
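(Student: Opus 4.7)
The plan is to reduce the claim to the classical Hanson--Wright inequality for vectors with independent sub-Gaussian coordinates. The key observation is that jointly Gaussian pairs $(X_i,Y_i)\sim \gaussianrho$ admit the decomposition $Y_i = \rho X_i + \sqrt{1-\rho^2}\,Z_i$, where $Z=(Z_1,\ldots,Z_n)$ is a standard Gaussian vector independent of $X$. Substituting this into the bilinear form yields
\begin{align*}
    X^\top M_0 Y - \rho\,\mathrm{Tr}(M_0)
    = \rho\bigl(X^\top M_0 X - \mathrm{Tr}(M_0)\bigr) + \sqrt{1-\rho^2}\,X^\top M_0 Z,
\end{align*}
where the first term is a centered quadratic form in the independent-entries vector $X$, and the second term is a centered bilinear form in two independent standard Gaussian vectors. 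This splits the analysis into two well-studied pieces.

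For the quadratic term, I would invoke the classical Hanson--Wright inequality: there is an absolute constant $c>0$ such that
\begin{align*}
    \mathbb{P}\left[\left|X^\top M_0 X - \mathrm{Tr}(M_0)\right| \ge t\right] \le 2\exp\left(-c\min\left(\frac{t^2}{\|M_0\|_F^2},\frac{t}{\|M_0\|_2}\right)\right).
\end{align*}
For the bilinear term, I would either condition on $X$ to get a centered Gaussian with variance $\|M_0^\top X\|_2^2$ (then combine a standard Gaussian tail bound with the concentration of $\|M_0^\top X\|_2^2$ around $\|M_0\|_F^2$), or directly apply the bilinear form of Hanson--Wright (e.g., the Rudelson--Vershynin decoupling version), which yields the same two-regime tail with $\|M_0\|_F$ in the sub-Gaussian regime and $\|M_0\|_2$ in the sub-exponential regime.

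To finish, I would set $t = \tfrac{1}{2}c_0\bigl(\|M_0\|_F\sqrt{\log(1/\delta)}\vee \|M_0\|_2\log(1/\delta)\bigr)$ in each of the two tail estimates, choose $c_0$ large enough (and absorb the factors $\rho\le 1$ and $\sqrt{1-\rho^2}\le 1$ into the constant) so that both probabilities are at most $\delta/2$, then apply the union bound. The only real care needed is bookkeeping of constants: verifying that the $\min$ structure of each individual tail translates into the $\vee$ structure in the final bound, which is immediate since $\min(a,b)\ge c \log(1/\delta)$ is ensured precisely when $t$ exceeds both thresholds $\|M_0\|_F\sqrt{\log(1/\delta)}/\sqrt{c}$ and $\|M_0\|_2\log(1/\delta)/c$. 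No step is particularly subtle, and the main obstacle is simply invoking the right off-the-shelf Hanson--Wright statement in a form that covers bilinear forms of independent Gaussian vectors.
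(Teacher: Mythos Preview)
Your proposal is correct but uses a different decomposition from the paper. The paper instead applies the polarization identity
\[
X^\top M_0 Y = \tfrac{1}{4}(X+Y)^\top M_0(X+Y) - \tfrac{1}{4}(X-Y)^\top M_0(X-Y),
\]
observes that $X+Y$ and $X-Y$ are Gaussian vectors with i.i.d.\ entries (of variances $2+2\rho$ and $2-2\rho$), applies the classical Hanson--Wright inequality to each quadratic form, and union bounds. Your route via $Y=\rho X+\sqrt{1-\rho^2}\,Z$ splits the problem into one quadratic form plus one bilinear form in \emph{independent} vectors; this is equally valid but requires either invoking a bilinear Hanson--Wright statement or embedding $X^\top M_0 Z$ as a quadratic form in the stacked vector $(X,Z)$. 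The paper's decomposition is marginally more economical in that it only ever calls the quadratic-form version, while yours makes the dependence on $\rho$ more explicit (the factors $\rho$ and $\sqrt{1-\rho^2}$ appear separately and could in principle be exploited). Both arguments reach the same conclusion with the same constant bookkeeping at the end.
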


\begin{proof}
    Note that $X^{\top} M_0Y = \frac{1}{4}(X+Y)^\top M_0(X+Y)-\frac{1}{4}(X-Y)^\top M_0(X-Y)$ and \begin{align*}
        \expect{(X+Y)^\top M_0(X+Y)} = (2+2\rho)\mathrm{Tr}(M_0),\expect{(X-Y)^\top M_0(X-Y)} = (2-2\rho)\mathrm{Tr}(M_0).
    \end{align*}
    By Hanson-Wright inequality \cite{hanson1971bound}, there exists some universal constant $c_0$ such that \begin{align*}
        &\prob{\left| \frac{1}{4}(X+Y)^\top M_0(X+Y) - \frac{2+2\rho}{4}\mathrm{Tr}(M_0)\right| \ge \frac{c_0}{2}\pth{\Vert M_0\Vert_F \sqrt{\log(1/\delta)}\vee\Vert M_0\Vert_2\log(1/\delta)}}\le \frac{\delta}{2},\\&\prob{\left| \frac{1}{4}(X-Y)^\top M_0(X-Y) - \frac{2-2\rho}{4}\mathrm{Tr}(M_0)\right| \ge \frac{c_0}{2}\pth{\Vert M_0\Vert_F \sqrt{\log(1/\delta)}\vee\Vert M_0\Vert_2\log(1/\delta)}}\le \frac{\delta}{2}
    \end{align*} for any $\delta>0$.
    Consequently, \begin{align*}
        &~\prob{\left| X^\top M_0 Y -\rho \mathrm{Tr}(M_0)\right|\ge c_0\pth{\Vert M_0\Vert_F \sqrt{\log(1/\delta)}\vee\Vert M_0\Vert_2\log(1/\delta)}}\\ \le &~\prob{\left| \frac{1}{4}(X+Y)^\top M_0(X+Y) - \frac{2+2\rho}{4}\mathrm{Tr}(M_0)\right| \ge \frac{c_0}{2}\pth{\Vert M_0\Vert_F \sqrt{\log(1/\delta)}\vee\Vert M_0\Vert_2\log(1/\delta)}}\\+&~ \prob{\left| \frac{1}{4}(X-Y)^\top M_0(X-Y) - \frac{2-2\rho}{4}\mathrm{Tr}(M_0)\right| \ge \frac{c_0}{2}\pth{\Vert M_0\Vert_F \sqrt{\log(1/\delta)}\vee\Vert M_0\Vert_2\log(1/\delta)}}\le \delta.
    \end{align*}
\end{proof}

\subsection{Concentration Inequalities for Chi-Squared Distribution}
\begin{lemma}[Chernoff's inequality for Chi-squared distribution]\label{lem:chisquare}
    Suppose $\xi\sim \chi^2(n)$. Then, for any  $\delta>0$, we have \begin{align}
        \prob{\xi>(1+\delta) n}&\le  \exp\pth{-\frac{n}{2}\pth{\delta-\log\pth{1+\delta}}}\label{eq:concentration_for_chisquare_uptl},\\\prob{\xi<(1-\delta) n}&\le \exp\pth{-\frac{n}{2}\pth{-\delta-\log(1-\delta)}}\label{eq:concentration_for_chisquare_lwtl}.
    \end{align}
\end{lemma}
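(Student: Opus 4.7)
The plan is to apply a standard Chernoff argument using the moment generating function of the chi-squared distribution. Recall that if $\xi \sim \chi^2(n)$, then $\expect{e^{\lambda \xi}} = (1-2\lambda)^{-n/2}$ for any $\lambda < 1/2$, which follows directly from the fact that $\xi$ is a sum of $n$ independent $\chi^2(1)$ random variables, each with MGF $(1-2\lambda)^{-1/2}$.

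For the upper tail, I would apply Markov's inequality to $e^{\lambda \xi}$ with $\lambda \in (0, 1/2)$, giving
\begin{align*}
    \prob{\xi > (1+\delta) n} \le e^{-\lambda (1+\delta) n} \pth{1-2\lambda}^{-n/2}.
\end{align*}
Then I would optimize the exponent $-\lambda(1+\delta) - \tfrac{1}{2}\log(1-2\lambda)$ over $\lambda$: setting the derivative to zero yields $\lambda^* = \frac{\delta}{2(1+\delta)} \in (0,1/2)$, and plugging back gives $1-2\lambda^* = \frac{1}{1+\delta}$, so the exponent becomes $-\tfrac{1}{2}(\delta - \log(1+\delta))$, yielding~\eqref{eq:concentration_for_chisquare_uptl}.

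For the lower tail, I would apply Markov's inequality to $e^{-\lambda \xi}$ with $\lambda > 0$, giving
\begin{align*}
    \prob{\xi < (1-\delta) n} \le e^{\lambda (1-\delta) n} \pth{1+2\lambda}^{-n/2},
\end{align*}
using the MGF at $-\lambda$. Optimizing the exponent $\lambda(1-\delta) - \tfrac{1}{2}\log(1+2\lambda)$, the optimal $\lambda^* = \frac{\delta}{2(1-\delta)} > 0$ (valid for $\delta \in (0,1)$, and when $\delta \geq 1$ the bound is trivial since the left-hand side is zero). Substituting $1+2\lambda^* = \frac{1}{1-\delta}$ gives the exponent $-\tfrac{1}{2}(-\delta - \log(1-\delta))$, yielding~\eqref{eq:concentration_for_chisquare_lwtl}.

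Both steps are essentially routine Chernoff calculations with no real obstacle. The only minor care point is ensuring the optimizing $\lambda^*$ lies in the admissible range $(0, 1/2)$ for the upper tail (which it does for all $\delta > 0$) and handling the case $\delta \geq 1$ in the lower tail trivially, since $\xi \geq 0$ forces $\prob{\xi < (1-\delta)n} = 0$ whenever $(1-\delta)n \leq 0$.
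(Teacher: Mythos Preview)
Your proof is correct; the Chernoff optimization is carried out properly in both tails, and your handling of the admissible ranges for $\lambda^*$ and the trivial case $\delta\ge 1$ is sound. The paper does not give its own argument here but simply cites \cite{ghosh2021exponential} for these standard Chernoff bounds, so your self-contained derivation is essentially the same approach made explicit.
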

\begin{proof}
    The results follow from Theorems 1 and 2 in \cite{ghosh2021exponential}.
\end{proof}

\subsection{Concentration Inequalities for Hypergeometric Distribution}

\begin{lemma}[Concentration inequalities for Hypergeometric distribution]\label{lem:Hypergeometric_distribution_prop}
For $\eta \sim \HG(n,s,s)$ and any $\epsilon>0$, we have 

\begin{align}
    \label{eq:concentration_for_hyper_2}\prob{\eta \ge \frac{(1+\epsilon)s^2}{n}}&\le \exp\pth{-\frac{\epsilon^2 s^2}{(2+\epsilon) n}}\wedge \exp\pth{-\frac{\epsilon^2 s^3}{n^2}},\\\label{eq:concentration_for_hyper_1}\prob{\eta \le\frac{(1-\epsilon)s^2}{n}} &\le \exp\pth{-\frac{\epsilon^2 s^2}{2n}}\wedge \exp\pth{-\frac{\epsilon^2 s^3}{n^2}}.
\end{align}

\end{lemma}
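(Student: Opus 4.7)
The plan is to recognize that both bounds in each inequality are standard concentration estimates for the hypergeometric distribution, each provable from a different classical tool, and to argue the minimum of the two follows trivially by taking the better of the two estimates.

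\textbf{Step 1: Reduction to binomial Chernoff via Hoeffding's comparison.} I would first invoke Hoeffding's classical observation that the random variable $\eta \sim \HG(n,s,s)$, viewed as a sum of $s$ draws \emph{without} replacement from a population of $n$ binary labels (exactly $s$ of which are $1$), satisfies
\[
\mathbb{E}\bigl[\phi(\eta)\bigr] \;\le\; \mathbb{E}\bigl[\phi(X)\bigr]
\]
for every convex function $\phi$, where $X \sim \Bin(s,s/n)$ is the corresponding with-replacement sum. Applied to $\phi(x) = e^{\lambda x}$ for $\lambda \in \mathbb{R}$, this immediately transfers all Chernoff bounds for $X$ to $\eta$. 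Both distributions share the same mean $\mu = s^2/n$.

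\textbf{Step 2: Relative Chernoff bounds.} With the MGF comparison in hand, I apply the standard relative Chernoff--Bernstein inequality for binomials: for $\mu = s^2/n$ and $\epsilon > 0$,
\[
\prob{X \ge (1+\epsilon)\mu} \le \exp\bigl(-\mu\,((1+\epsilon)\log(1+\epsilon) - \epsilon)\bigr) \le \exp\!\pth{-\tfrac{\epsilon^2 \mu}{2+\epsilon}},
\]
using the elementary inequality $(1+\epsilon)\log(1+\epsilon) - \epsilon \ge \epsilon^2/(2+\epsilon)$ for $\epsilon > 0$. Substituting $\mu = s^2/n$ yields the first factor in~\eqref{eq:concentration_for_hyper_2}. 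The lower tail is handled analogously with the sharper inequality $-\log(1-\epsilon) - \epsilon \ge \epsilon^2/2$ for $\epsilon \in (0,1)$, giving $\exp(-\epsilon^2 s^2/(2n))$ and hence the first factor in~\eqref{eq:concentration_for_hyper_1}.

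\textbf{Step 3: Hoeffding's bounded-differences bound for sampling without replacement.} For the second factors $\exp(-\epsilon^2 s^3/n^2)$, I would invoke Hoeffding's inequality for sampling without replacement: for $\eta \sim \HG(n,s,s)$ and any $t>0$,
\[
\prob{\,|\eta - s^2/n| \ge s t\,} \le 2\exp(-2 s t^2).
\]
Setting $st = \epsilon s^2/n$, equivalently $t = \epsilon s/n$, produces $2\exp(-2\epsilon^2 s^3/n^2)$, which is stronger than the claimed $\exp(-\epsilon^2 s^3/n^2)$ for both tails (the factor of $2$ is absorbed, e.g.\ by using the one-sided Hoeffding bound without the leading $2$).

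\textbf{Step 4: Combining.} Since the two arguments yield two independent upper bounds on the same probability, the minimum of the two (the meaning of $\wedge$) is also an upper bound. There is no real obstacle here; the main technical content is invoking the correct comparison (Step 1) that justifies using binomial Chernoff for a hypergeometric variable, and verifying the elementary scalar inequalities on $(1+\epsilon)\log(1+\epsilon) - \epsilon$ and $-\log(1-\epsilon)-\epsilon$. Both are textbook facts, so the proof is essentially a sequence of citations and direct substitutions.
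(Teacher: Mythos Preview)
Your proposal is correct and follows essentially the same route as the paper: both use Hoeffding's convex-function comparison to transfer binomial Chernoff bounds to the hypergeometric (your Steps 1--2, the paper's citation of Hoeffding and Mitzenmacher--Upfal), and both invoke Hoeffding's inequality for sampling without replacement to obtain the $\exp(-\epsilon^2 s^3/n^2)$ term (your Step 3). Your write-up is slightly more explicit about the scalar inequalities, but the argument is the same.
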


\begin{proof}
Denote $\xi \sim \Bin\pth{s,\frac{s}{n}}$, by Theorem 4 in \cite{hoeffding1994probability}, for any continuous and convex function $f$, we have 
\begin{align*}
    \expect{f(\eta)} \le \expect{f(\xi)}.
\end{align*}
We note the function $f(x) =\exp\pth{\lambda x}$ is continuous and convex for any $\lambda\in \mathbb{R}$. Therefore, we have $\expect{\exp\pth{\lambda \eta}}\le \expect{\exp\pth{\lambda \xi}}$ for any $\lambda \in \mathbb{R}$, and thus the Chernoff bound for $\xi$ remains valid for $\eta$.
Combining this with Theorems 4.4 and 4.5 in  \cite{mitzenmacher_probability_2005}, we have 
\begin{equation*}
    \prob{\eta\ge \frac{(1+\epsilon) s^2}{n}}\le \exp\pth{-\frac{\epsilon^2 s^2}{(2+\epsilon)n}},\quad\prob{\eta \le \frac{(1-\epsilon)s^2}{n}}\le \exp\pth{-\frac{\epsilon^2 s^2}{2n}}.
\end{equation*}
By Hoeffding's inequallity \cite{hoeffding1994probability}, we also have 
\begin{equation*}
    \prob{\eta\ge \frac{(1+\epsilon) s^2}{n}}\le \exp\pth{-\frac{\epsilon^2 s^3}{n^2}},\quad \prob{\eta \le \frac{(1-\epsilon)s^2}{n}}\le \exp\pth{-\frac{\epsilon^2 s^3}{n^2}}.
\end{equation*}
Therefore, we finish the proof of Lemma \ref{lem:Hypergeometric_distribution_prop}.
\end{proof}

\section{Additional Experiments}\label{apdsec:additional-exp}
We provide a simple illustration on how our algorithm can be applied on real dataset. We conduct an experiment on Freeman’s EIES networks~\cite{freeman1979networkers}, a small dataset of 46 researchers, where edge weights represent communication strength at two time points. 
We apply our method to test for correlation between these two temporal networks. We examine how sample size affects privacy protection by analyzing the normalized similarity score, defined as the similarity score $\en(\maH_\pi^f)$ 
divided by $\binom{s}{2}$. Indeed, a lower score suggests weaker correlation and greater support for the null hypothesis of independence.

We apply our algorithm to the EIES dataset at different sample sizes, 
$s = 10, 20, 40$ and compute the corresponding normalized similarity scores: -1.066, -0.905, and -0.651. The scores increase with sample size, indicating stronger detected correlation. The lower scores at small sample sizes reflect failed correlation detection, quantifying the reduction in re-identification risk.

\end{document}